\documentclass[final,12pt]{colt2023} 

\usepackage{mathtools}
\usepackage{comment}
\usepackage{xargs}

\def\msx{\mathsf{X}}

\def\cost{\mathsf{c}}
\def\rset{\mathbb{R}}
\def\psiMonge{\Psi}
\def\TMonge{\mathrm{T}}
\def\ie{\textit{i.e.}}

\usepackage{cleveref}

\crefformat{assumptionH}{{\textbf{H}}#2#1#3}


\newcommand{\bbRD}{\mathbb{R}^d}


\newcommand{\cP}{\mathcal{P}}




\newcommand{\bes}{\begin{equation}}
\newcommand{\ees}{\end{equation}}
\newcommand{\beas}{\begin{eqnarray}}
\newcommand{\eeas}{\end{eqnarray}}
\newcommand{\bea}{\begin{eqnarray}}
\newcommand{\eea}{\end{eqnarray}}
\newcommand{\be}{\begin{equation}}
\newcommand{\ee}{\end{equation}}
\newcommand{\bei}{\begin{itemize}}
\newcommand{\eei}{\end{itemize}}
\newcommand{\bec}{\begin{cases}}
\newcommand{\eec}{\end{cases}}
\newcommand{\ben}{\begin{enumerate}}
\newcommand{\een}{\end{enumerate}}


\newcommand{\bbE}{\mathbb{E}}



\newcommand{\bbl}{\begin{block}}
\newcommand{\ebl}{\end{block}}

\newcommand{\De}{\mathrm{d}}



\newcommand{\rmP}{\mathrm{P}}

\newcommand{\rmR}{\mathrm{R}}
\newcommand{\rme}{\mathrm{e}}


\newtheorem{assumption}[theorem]{Assumption}


\newcommand{\mm}{\mathbf{m}}

\newcommand{\sfd}{{\sf d}}

\def\torus{\tset}
\def\torusL{\torus_L}
\def\Haar{\mathtt{H}}
\def\HaarL{\Haar_L}


\newcommandx{\Vnorm}[2][1=V]{\| #2 \|_{#1}}
\newcommandx{\norm}[2][1=]{\ifthenelse{\equal{#1}{}}{\left\Vert #2 \right\Vert}{\left\Vert #2 \right\Vert^{#1}}}
\newcommandx{\normLigne}[2][1=]{\ifthenelse{\equal{#1}{}}{\Vert #2 \Vert}{\Vert #2\Vert^{#1}}}

\newcommand{\defEns}[1]{\left\lbrace #1 \right\rbrace }

\def\mrl{\mathrm{L}}
\def\rml{\mrl}


\def\rset{\mathbb{R}}
\def\tset{\mathbb{T}}

\def\zset{\mathbb{Z}}
\def\nset{\mathbb{N}}

\def\ie{\textit{i.e.}}

\def\eqsp{\;}

\def\bfm{\mathbf{m}}

\def\rmd{\mathrm{d}}
\def\Ent{\mathrm{Ent}}
\def\Leb{\mathrm{Leb}}
\def\varphistar{\varphi^\star}
\def\psistar{\psi^{\star}}
\def\pistar{\pi^{\star}}

\def\cconst{\mathrm{c}_{\mathrm{S}}}

\def\PP{\mathbb{P}}
\def\mcf{\mathcal{F}}
\def\blambda{\bar{\lambda}}
\def\blambdaV{\blambda_V}
\def\msm{\mathsf{M}}


\usepackage{autonum}

\title[Exponential convergence of Sinkhorn algorithm: a coupling approach]{Non-asymptotic convergence bounds for Sinkhorn iterates and their gradients: a coupling approach.}
\usepackage{times}



\coltauthor{%
 \Name{Giacomo Greco} \Email{g.greco@tue.nl}\\
 \addr Eindhoven University of Technology
 \AND
 \Name{Maxence Noble} \Email{maxence.noble-bourillot@polytechnique.edu}\\
 \Name{Giovanni Conforti} \Email{giovanni.conforti@polytechnique.edu}\\
 \Name{Alain Oliviero--Durmus} \Email{alain.durmus@polytechnique.edu}\\
 \addr \'Ecole Polytechnique
}

\begin{document}

\maketitle

\begin{abstract}%
    Computational optimal transport (OT) has recently emerged as a powerful framework with applications in various fields. In this paper we focus on a relaxation of the original OT problem, the entropic OT problem, which allows to implement efficient and practical algorithmic solutions, even in high dimensional settings. This formulation, also known as the Schr\"odinger Bridge problem, notably connects with Stochastic Optimal Control (SOC) and can be solved with the popular Sinkhorn algorithm. In the case of discrete-state spaces, this algorithm is known to have exponential convergence; however, achieving a similar rate of convergence in a more general setting is still an active area of research. In this work, we analyze the convergence of the Sinkhorn algorithm for probability measures defined on the $d$-dimensional torus $\torusL^d$, that admit densities with respect to the Haar measure of $\torusL^d$. In particular, we prove pointwise exponential convergence of Sinkhorn iterates and their gradient. Our proof relies on the connection between these iterates and the evolution along the Hamilton-Jacobi-Bellman equations of value functions obtained from SOC -problems. Our approach is novel in that it is purely probabilistic and relies on coupling by reflection techniques for controlled diffusions on the torus.
\end{abstract}

\begin{keywords}%
optimal transport, Sinkhorn algorithm, stochastic optimal control, Schr\"odinger bridge
\end{keywords}

\def\mcx{\mathcal{X}}
\def\msa{\mathsf{A}}
\def\forany{\text{ for any }}

\section{Introduction}\label{sec:intro}

Computational optimal transport (OT) has known great progress over these past few years \citep{peyre2019computational}, and has thus become a popular tool in a wide range of fields such as machine learning \citep{adler2017learning, arjovsky2017wasserstein}, computer vision \citep{dominitz2009texture, solomon2015convolutional}, or signal processing \citep{kolouri2017optimal}. Let $\mu$ and $\nu$ be two probability measures defined on a measurable state space $(\msx,\mcx)$. The primal OT problem \citep{villani2008optimal} between $\mu$ and $\nu$, corresponding to a measurable cost function $\cost\, : \msx^2 \to [0, +\infty)$, can be formulated as solving the optimization problem
\begin{equation}
  \label{ot_pb}
    \inf_{\pi \in \Pi(\mu, \nu)}\int \cost(x,y)\De \pi(x,y) \, ,
\end{equation}
where $\Pi(\mu, \nu)$ is defined as the set of couplings between $\mu$ and $\nu$, \ie, $\pi \in \Pi(\mu,\nu)$ if $\pi(\msa \times \msx) = \mu(\msa)$ and $\pi(\msx\times \msa)= \nu(\msa)$ for any $\msa \in \mcx$. This problem admits the following dual formulation
\begin{equation}
  \label{ot_dual}
    \sup_{(\varphistar,\psistar) \in \mathcal{R}(\cost)}\int \{\varphistar(x)+\psistar(y)\}\De (\mu \otimes \nu)(x,y) \, ,
\end{equation}
where $$\mathcal{R}(\cost)=\{(\varphistar, \psistar)\in \mathcal{C}(\msx)^2: \forany (x,y)\in \msx^2, \varphistar(x) +\psistar(y) \leq \cost(x,y)\}$$ is the set of ``Kantorovitch potentials'' \citep{kellerer1984duality}. In many applications of OT, $\msx\subset\bbRD$ and one chooses the Euclidean quadratic cost $\cost(x,y)=\|x-y\|^2/2$.  Under this setting, Monge-Kantorovich's theorem states that \eqref{ot_pb} admits a unique minimizer $\pi^\star$. In addition, in the case where $\mu$ admits a density with respect to the Lebesgue measure, Brenier's theorem \citep{brenier1991polar} established that this minimizer is also solution to the Monge problem, \ie, there exists a convex function $\psiMonge : \rset^d \to \rset \cup \{\infty\}$ such that $\pi^\star$ is the pushforward of $\mu$ by the application $x \mapsto (x,\TMonge(x))$ with $\TMonge(x) = \nabla \psiMonge(x)$ if $\psiMonge(x) < \infty$ and $\TMonge(x) = 0$ otherwise (referred to as the ``Monge'' map). Moreover, $\psiMonge$ is related to a Kantorovitch potential $\varphistar$ solving \eqref{ot_dual} as $\psiMonge(x)=\normLigne{x}^2/2-\varphistar(x)$.  Unfortunately, OT problems \eqref{ot_pb} and \eqref{ot_dual} suffer from the curse of dimensionality \citep{papadakis2014optimal, niles2022estimation}, which makes impossible to compute $\pi^\star$ or the map $\TMonge$ in high-dimensional settings. Although recent works have been carried out this problem assuming regularity conditions on the domain $\msx$ or on some densities of $\mu$ and $\nu$, if they exist, solving efficiently \eqref{ot_pb} and \eqref{ot_dual} remains an open problem \citep{benamou2014numerical, niles2022estimation, forrow2019statistical}.

To circumvent these computational limits, an approach consists in computing a regularized version of the OT problem \eqref{ot_pb}, which penalizes the entropy of the joint coupling $\pi$:
\begin{align}\label{ot_pb_reg}
    \inf_{\pi \in \Pi(\mu, \nu)}\defEns{\int \cost(x,y)\De \pi(x,y) + \varepsilon \mathrm{KL}(\pi \mid \mu \otimes \nu) } \, ,
\end{align}
where $\mathrm{KL}$ denotes the Kullback-Leibler divergence and $\varepsilon>0$ is a regularization parameter. The entropic regularization notably defines a  convex minimization problem (in contrast to \eqref{ot_pb} in general settings), which admits a unique solution $\pi_\varepsilon^\star$. In addition, under appropriate conditions, $\{\pi_\varepsilon^\star\}_{\varepsilon >0}$ converges to a solution of \eqref{ot_pb}; see e.g., \cite{leonard2012schrodinger}.  The entropic OT problem \eqref{ot_pb_reg} can be tracked back to Schr\"odinger \citep{Schr} and may be casted as a ``static Schr\"odinger problem'' \citep{LeoSch, conforti2019second, carlier2017convergence} given by
\begin{equation}
  \label{ot_pb_reg_1}
    \inf_{\pi \in \Pi(\mu, \nu)}\mathrm{KL}(\pi \mid \rho_{\varepsilon}) \, ,
\end{equation}
where $\rho_{\varepsilon} \in \cP(\msx \times \msx)$ is the \textit{reference} measure defined by $\De \rho_{\varepsilon} (x,y)/\De\{\mu \otimes \nu\}\propto\exp(-\cost(x,y)/\varepsilon)$.
Under some conditions on $\mu$ and $\nu$,  $\pi_\varepsilon^\star$ admits as  density 
\begin{equation}\label{eq:sch_potentials}
\frac{\De \pi_\varepsilon^\star}{\De \rho_{\varepsilon} }(x,y)=\exp[-(\varphi_\varepsilon(x)+\psi_\varepsilon(y))]\, ,
\end{equation}
where $\varphi_\varepsilon \in \mathrm{L}^1(\mu)$ and $\psi_\varepsilon \in \mathrm{L}^1(\nu)$ are called the ``Schr\"odinger potentials''. These potentials are unique up to a trivial additive constant and can be considered as a regularized version of the Kantorovich potentials $\varphistar$ and $\psistar$. Indeed, under similar assumptions as Brenier's theorem, it holds that the (rescaled) Schr\"odinger potentials and their gradients respectively converge to the Kantorovich potentials and their gradients as $\epsilon$ goes to 0 \citep{lagg2022gradient}, hence recovering the Monge map $\psiMonge$.  In contrast to exact OT problems \eqref{ot_pb} and \eqref{ot_dual}, \eqref{eq:sch_potentials} can be solved quickly using the Sinkhorn algorithm \citep{Sinkhorn64, cuturi2013sinkhorn}, and has thus become a popular alternative to the standard OT formulation.
The Sinkhorn algorithm consists in defining sequences $(\varphi_{\varepsilon,n})_{n \in\nset}$ and $(\psi_{\varepsilon,n})_{n \in\nset}$ respectively approximating $\varphi_{\varepsilon}$ and $\psi_{\varepsilon}$, relying that these two functions are fixed points of a particular functional. In this paper, we are interested in the convergence of these two sequences to the ``Schr\"odinger potentials'' $\varphi_{\varepsilon}$ and $\psi_{\varepsilon}$. More precisely, our contributions are as follows.

\paragraph{Contributions.}

We provide a new approach to study the convergence of the Sinkhorn algorithm for the case where the state space $\msx$ is chosen as the $d$-dimensional torus $\mathbb{T}_L^d = \rset^d/(L\zset^d)$, for $L >0$, endowed with its canonical Riemannian metric. In particular, our analysis exploits the relationship between the Schr\"odinger bridge problem and Stochastic Optimal Control (SOC). As shown by \cite{LeoSch} for the case $\msx=\bbRD$, $\pi_\varepsilon^\star$ is the distribution of the pair of random variables $(X_0,X_1)$, where $(X_t)_{t \in[0,1]}$ evolves along the stochastic differential equation $\De X_t=u_\varepsilon^\star(t,X_t) \De t + \sqrt{\varepsilon} \,\De B_t$ and $u_\varepsilon^\star$ is the \textit{control} function solving  
  \bes
  \label{eq:control_intro}
\begin{split}
  \inf_{u: [0,1] \times \bbRD \to \bbRD} \frac12\int_0^1 \mathbb{E}\|u(t,X_t)\|^2 \De t\quad\text{such that}\quad
\begin{cases}\De X_t=u(t,X_t) \De t + \sqrt{\varepsilon} \,\De B_t \eqsp,\\
  X_0\sim \mu \eqsp, X_1\sim \nu \eqsp,
\end{cases}
\end{split}
\ees
where $(B_t)_{t \geq 0}$ is a standard Brownian motion over $\bbRD$.
By establishing new convergence bounds for inhomogeneous controlled processes on $\torusL^d$ related to \eqref{eq:control_intro} using coupling techniques, we show the pointwise exponential convergence of the sequence of Sinkhorn iterates. While this result is not new, our approach is new in that it is essentially probabilistic in nature.
More importantly, this approach allows us to prove our second contribution, namely the convergence of the gradient for the Sinkhorn iterates $(\nabla \varphi_{\varepsilon,n})_{n \in\nset}$ and $(\nabla \psi_{\varepsilon,n})_{n \in\nset}$ to the gradients of the Schr\"odinger potentials, $\nabla \varphi_{\varepsilon}$ resp. $\nabla \psi_{\varepsilon}$, which are used to estimate the Brenier map.  
To the best of our knowledge, our analysis is the first to derive convergence of gradients independently from iterates' convergence \footnote{During the discussion period before acceptance of the present paper, one of the reviewers pointed out that it could be possible to adapt \cite[Lemma 4.8]{delBarrio2022} to obtain convergence of the sequence of the gradients from the convergence of Sinkhorn iterates. However, the constants that would appear in the resulting convergence bounds are not explicit.}. We highlight that this approach is of primary interest since it can be directly generalized to the unbounded setting.
\paragraph{Outline of the work.} The paper is organized as follows. In Section \ref{sec:ass_and_res}, we introduce the theoretical setting of our analysis of Sinkhorn algorithm, detail our assumptions and present our main result. In Section \ref{sec:explicit_rates}, we discuss the dependence of the convergence rate in the parameters of the problem. We review related work and precisely detail our contributions in Section \ref{sec:related_work}, and present the main steps of our proof in Section \ref{sec:proof}.

\paragraph{Notation.}For any measurable space $(\msx, \mathcal{X})$, we denote by $\cP(\msx)$ the space of probability measures defined on $(\msx, \mathcal{X})$. 
Denote by $\rml^1 (\msx)$ the set of function integrable with respect to $\mu$. For any two distributions $\mu,\nu \in \cP(\msx)$, we define the Kullback--Leibler divergence between $\mu$ and $\nu$ as  $\mathrm{KL}(\mu \mid \nu)= \int_{\msx} \De \mu \log(\De \mu/\De \nu)$ if $\mu \ll \nu$ and $\mathrm{KL}(\mu \mid \nu)= +\infty$ otherwise. In the case $\msx = \rset^d$, we denote by $\Leb$ the Lebesgue measure and  define $\Ent(\mu) = \int \rmd \Leb \log (\rmd \mu / \rmd \Leb) $ if  $\mu \ll \Leb$ and $+\infty$ otherwise. 

\section{Theoretical framework and main results} \label{sec:ass_and_res}

\paragraph{Setting and Sinkhorn iterates.}Throughout this paper, we consider two probability measures $\mu$ and $\nu$ defined on the torus $\mathbb{T}_L^d\coloneqq \bbRD /L\mathbb{Z}^d$ of length $L >0$. Since $\torus_L^d$, endowed with addition, is a compact Lie group \cite[Chapter 15]{bump2013lie}, we denote by $\HaarL^d$ the left Haar measure which corresponds to its Riemannian volume form \cite[Chapter 11.4]{folland2013real}. Furthermore, we consider in our paper the problem \eqref{ot_pb_reg_1} for a particular class of reference measure $\rho_{\varepsilon}$:  for a 
fixed time horizon $T>0$, we aim at solving the static Schr\"odinger problem defined by
\begin{equation}\label{eq:sb_ours}
    \inf_{\pi \in \Pi(\mu, \nu)}\mathrm{KL}(\pi \mid \rmR_{0,T}) \, ,
\end{equation}
where $\rmR_{0,T}$ is a distribution on $\tset^{2d}_L$  related to the Langevin stochastic differential equation (SDE) 
\begin{equation}\label{eq:sde_sb}
    \De X_t = -\nabla V(X_t)\De t + \De B_t \,,
\end{equation}
for a twice continuously differentiable potential function $V: \mathbb{T}_L^d \to \mathbb{R}$. Note that for $V\equiv 0$ the above stochastic dynamics corresponds to Brownian motions (\ie, the one associated to the Laplacian operator). If we further consider as state space $\msx = \rset^d$, then $\bfm$ equals the Lebesgue measure $\Leb$ and  \eqref{eq:sb_ours} is an equivalent formulation of the entropic transport problem
\begin{align}\label{ot_pb_reg_our}
    \inf_{\pi \in \Pi(\mu, \nu)}\defEns{\frac12\int \norm{x-y}^2 \De \pi(x,y) + T\,\mathrm{KL}(\pi \mid \mu \otimes \nu)  } \, .
\end{align}
For the general case $V\not \equiv 0$, we refer to  \cite{garcia2019langevin} for an introduction to Langevin diffusion on $\torusL^d$.
Since $V$ is twice continuously differentiable and $\torusL^d$ is compact, by \cite[Theorem 10.1]{kent1978time}, \eqref{eq:sde_sb} admits a unique solution and define a Markov semigroup $(\rmP_t)_{t \geq 0}$ with bi-continuous transition density $(p_t)_{t > 0}$  with respect to the stationary distribution, $\bfm(\rmd x) = \rme^{-2V(x)} \Haar(\rmd x)$, of \eqref{eq:sde_sb} that is symmetric, \ie, for any $x,y$, 
$p_t(x,y) = p_t(y,x)$. As a result, for any $T >0$, $(x,y) \mapsto  \rme^{-2V(x)} p_T(x,y)$ defines a joint density on $(\torusL^{d})^2$ and $\rmR_{0,T}$ is the corresponding probability measure.

We now state our main assumption. In particular, we will suppose that the two distributions $\mu$ and $\nu$ are equivalent to  $\bfm$.

\begin{assumption}
  \label{ass:density_mu_nu}
The potential  $V$ is twice continuously differentiable and there exists two continuously differentiable functions from $\torusL^d$ to $\rset$, $U_{\mu}$ and $U_{\nu}$, such that 
\begin{equation}\label{eq:log:densities}
\mu(\De x) = \exp(-U_{\mu}(x))\mm(\De x) \eqsp, \quad \nu(\De x) = \exp(-U_{\nu}(x))\mm(\De x) \eqsp.
\end{equation}
\end{assumption}

Under Assumption~\ref{ass:density_mu_nu}, $\mathrm{KL}(\mu\mid\mm)$ and $\mathrm{KL}(\nu \mid\mm)$ are finite and  \cite[Theorem 2.6]{LeoSch} shows that Problem \eqref{eq:sb_ours} admits a unique minimizer $\pistar\in \cP(\mathbb{T}_L^d \times \mathbb{T}_L^d)$ dominated by $\rmR_{0,T}$, which can be expressed via Schr\"odinger potentials $\varphistar, \psistar: \torusL^d \to \rset \cup \{\infty\}$ such that
\begin{equation}\label{plan:potentials}
  \frac{\De \pistar}{\De \rmR_{0,T}} (x,y)=\exp(-\varphistar(x)-\psistar(y))\,.
\end{equation}
Since $p_T$ is continuously differentiable with respect to its both variables \citep{kent1978time}, \citep[Lemma 4.11]{nutz2021introduction} implies that $\varphistar$ and $\psistar$ are also continuous and even Lipschitz. In fact, we will recover this result as a corollary of our results. 

  Here, we assume the potentials $\varphistar,\psistar$ satisfying the symmetric normalization $
\int\varphistar\De\mu+\mathrm{KL}(\mu\mid\mm)=\int\psistar\De\nu+\mathrm{KL}(\nu \mid\mm)$.
  Then, the  Sinkhorn algorithm \citep{Sinkhorn64,marinogerolin2020} consists in defining  the sequence of potentials $(\varphi_n)_{n \in \mathbb{N}}$ and $(\psi_n)_{n \in \mathbb{N}}$, starting from $\psi^0=0$\footnote{Let us point out the fact that our results hold true for any smooth choice of $\psi^0$. Here, we set $\psi^0=0$ for convenience.}, by the recursion: for $n\in \mathbb{N}$
  \begin{equation}
    \label{eq:sinkhorn_iterate}
\varphi^{n+1}\coloneqq U_{\mu}+ \log \rmP_T \rme^{-\psi^n}
 \eqsp, \qquad \psi^{n+1} \coloneqq U_{\nu}+\log \rmP_T \rme^{-\varphi^{n+1}} \eqsp.
\end{equation}
where $(\rmP_t)_{t\geq 0}$ is the semigroup associated to the SDE \eqref{eq:sde_sb}. From \eqref{eq:log:densities} and \eqref{plan:potentials}, it is immediate to deduce that the couple $(\varphistar, \psistar)$ is a fixed point of the above iteration. Moreover, the algorithm can be interpreted as fixing one of the prescribed marginals at each step. More precisely, when $\psi^n$ is given and we compute the next iterate $\varphi^{n+1}$, we are implicitly prescribing the couple $(\varphi^{n+1},\psi^n)$ to fit the first marginal constraint, \ie, we are imposing that the first marginal of the probability measure $\De \pi^{n+1,n}/\De \rmR_{0,T}\propto\exp(-\varphi^{n+1}(x)-\psi^n(y))$ is exactly $\mu$. At the next iteration, when we compute $\psi^{n+1}$ we forget about the first marginal and impose the constraint on the second one, which yields to imposing the second marginal of $\De \pi^{n+1,n+1}/\De \rmR_{0,T}\propto\exp(-\varphi^{n+1}(x)-\psi^{n+1}(y))$ to be equal to $\nu$. On the primal side this is also equivalent to minimizing at each step the $\mathrm{KL}$-divergence from the previous plan subject to a one-sided marginal constraint, \ie,
\begin{equation}
\label{eq:primal_pb}
\pi^{n+1,n}\coloneqq{\arg\min}_{\Pi(\mu,\star)} \mathrm{KL}(\cdot|\pi^{n,n})\eqsp, \qquad\pi^{n+1,n+1}\coloneqq {\arg\min}_{\Pi(\star,\nu)} \mathrm{KL}(\cdot|\pi^{n+1,n})\eqsp,
\end{equation}
where $\Pi(\mu,\star)$ (resp. $\Pi(\star,\nu)$) is the set of probability measures on $(\torusL^d)^2$ such that the first marginal is $\mu$ (resp. the second marginal is $\nu$). Let us also point out that the choice of an optimal-enough regularization parameter $T$, which guarantees both fast convergence of Sinkhorn algorithm and accurate approximation of OT, is still a very active field of research. For instance, on a discrete setting (with $n$-atomic supports), \cite{altschuler2017} suggests that choosing $T=\log(n)/\tau$ is enough in order to get a $\tau$-accuracy with just $O(\log(n)/\tau^3)$ iterations. We refer to \cite{peyre2019computational} for a further discussion on this trade-off.

In order to be consistent with the normalization imposed on the Schr\"odinger potentials $\varphistar$ and $\psistar$, we might have to normalize at each step the obtained iterates by considering for any $n\in \mathbb{N}$ 
 \bes
 \varphi^{\diamond n}=\varphi^n-\biggl(\int\varphi^n\De\mu-\int\varphistar\De\mu\biggr)\eqsp, \qquad 
 \psi^{\diamond n}=\psi^n-\biggl(\int\psi^n\De\nu-\int\psistar\De\nu\biggr)\,,
 \ees
 so that
 \begin{equation}
   \label{eq:equality_int}
\text{   $\int\varphi^{\diamond n}\De\mu=\int\varphistar\De\mu$ \, and \,  $\int\psi^{\diamond n}\De\nu=\int\psistar\De\nu $} \eqsp.
 \end{equation}
One may also consider other normalization options, such as  the pointwise condition $\varphistar(0)=\psistar(0)=0$, or the zero-mean normalization \citep{marinogerolin2020, carlier2020differential, deligiannidis2021quantitative, Carlier22multisink}

We consider on the torus $\mathbb{T}_L^d$ a \emph{sine-distance} which suits best our periodic situation. More precisely, for any pair $(x,\,y)\in\mathbb{T}_L^d\times \mathbb{T}_L^d$, we define 
\begin{equation}\label{def:delta}
\delta(x,y)= L\,\sqrt{\sum_{i=1}^d \sin^2\biggl(\frac{\pi}{L}(x^i-y^i)\biggr)}\in [0,L\eqsp d^{1/2}] \, ,
\end{equation}
where $x=(x^i)_{i\in[d]}$, $y=(y^i)_{i\in[d]}$ and the difference $(\pi/L) (x^i-y^i)$ has to be thought as an element of the one dimensional unit-torus $\mathbb{T}^1=\mathbb{S}^1$ identified with the unit-circle. Note that the above sine-distance is indeed a distance (the triangular inequality follows from the properties of $\sin$) and is equivalent to the flat-distance $\sfd$ induced by the Euclidean distance function:
\[(\pi\eqsp L)^{1/2}\,\sfd(x,y)\leq \delta(x,y)\leq\,\pi\,\sfd(x,y)\,.\] Let us remark here that our motivation behind adapting such equivalent metric comes from the coupling techniques considered in Appendix \ref{app:gio}, where we need to consider a smooth metric on~$\torusL^d$.

Finally, we define the Lispchitz norm of a function $h:\mathbb{T}_L^d \to \mathbb{R}$ as
\bes
\|h\|_{\mathrm{Lip}}\coloneqq \sup_{x\neq y\in\mathbb{T}_L^d}\frac{|h(x)-h(y)|}{\sfd(x,y)} \, .
\ees
We are now ready to state our main result. 

\begin{theorem}\label{thm:sink}
Assume Assumption~\ref{ass:density_mu_nu}. Then, there exist a rate $\gamma\in(0,1)$ and a positive constant $\cconst>0$ such that
\begin{equation}\label{eq:exp:conv}
\begin{aligned}
\sup_{x\in\mathbb{T}_L^d} |\varphi^{\diamond n}(x)-\varphistar(x)|\leq &\, Ld^{1/2}  \cconst \, \gamma^{2\,n-1}\, \|\psi^{ 0}-\psistar\|_{\mathrm{Lip}}\\
\sup_{x\in\mathbb{T}_L^d} |\psi^{\diamond n}(x)-\psistar(x)|
\leq&\, Ld^{1/2}  \cconst \, \gamma^{2\,n}\,\|\psi^{0}-\psistar\|_{\mathrm{Lip}}\,.
\end{aligned}
\end{equation}
Similarly, we get the uniform exponential convergence for the gradients
\begin{equation}\label{eq:exp:conv:grad}
\begin{aligned}
\sup_{x\in\mathbb{T}_L^d} |\nabla\varphi^{\diamond n}(x)-\nabla\varphistar(x)|\leq & \pi \cconst\, \gamma^{2\,n-1}\,\|\psi^{0}-\psistar\|_{\mathrm{Lip}}\\
\sup_{x\in\mathbb{T}_L^d} |\nabla\psi^{\diamond n}(x)-\nabla\psistar(x)|
\leq& \pi \cconst\, \gamma^{2\,n}\,\|\psi^{0}-\psistar\|_{\mathrm{Lip}}\,.
\end{aligned}
\end{equation}
Moreover, $\gamma$ and $\cconst$ have an explicit expression that can be computed, depending on the choice of the potential $V$, see \eqref{eq:explicit_constants}. 
\end{theorem}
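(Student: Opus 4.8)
The plan is to exploit the identity, coming from \eqref{eq:sinkhorn_iterate}, that each Sinkhorn half-step is (up to the additive data terms $U_\mu$, $U_\nu$) an application of the log-Laplace transform of the Markov semigroup $\rmP_T$, composed with sign flip. Concretely, if one sets $h^n \coloneqq \psi^n - \psistar$ and $g^{n+1} \coloneqq \varphi^{n+1} - \varphistar$, then, using the fixed-point property of $(\varphistar,\psistar)$ and the multiplicative structure of \eqref{plan:potentials}, one checks that
\begin{equation}
g^{n+1}(x) = -\log \frac{\rmP_T(\rme^{-\psi^n})(x)}{\rmP_T(\rme^{-\psistar})(x)} = -\log \int \rme^{-h^n(y)}\, \mathrm{q}^{\,\varphistar}_x(\De y)\eqsp,
\end{equation}
where $\mathrm{q}^{\,\varphistar}_x$ is the probability measure on $\torusL^d$ with density proportional to $\rme^{-\psistar(y)} p_T(x,y)$ — that is, the conditional law of $X_T$ given $X_0=x$ under the Schr\"odinger bridge $\pistar$. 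This is exactly the SOC picture announced in the introduction: evolving $\rme^{-\psi^n}$ by $\rmP_T$ and taking $-\log$ is the Hopf--Cole transform that turns the heat flow into the Hamilton--Jacobi--Bellman flow of the value function of \eqref{eq:control_intro}. The first step is therefore to make this dictionary precise and to record that, after normalization, $\int g^{n+1}\De\mu = 0$ and $\int h^{n+1}\De\nu = 0$, so oscillation control suffices.

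The heart of the argument is a one-step contraction for the map $h \mapsto -\log\int \rme^{-h(y)}\,\mathrm{q}^{\,\varphistar}_x(\De y)$ in Lipschitz seminorm, with a rate strictly less than $1$ uniformly in $x$. This is where the coupling-by-reflection estimates of Appendix~\ref{app:gio} enter: the family $\{\mathrm{q}^{\,\varphistar}_x\}_{x}$ is the time-$T$ marginal of an inhomogeneous controlled diffusion on the torus (the drift being $-\nabla V + \varepsilon\nabla\log(\text{backward Schr\"odinger potential})$), and coupling by reflection for such a diffusion on $\torusL^d$ — using the smooth sine-distance $\delta$ precisely so that reflection is well defined — yields a Wasserstein contraction $W_\delta(\mathrm{q}^{\,\varphistar}_x,\mathrm{q}^{\,\varphistar}_{x'}) \le \gamma\,\delta(x,x')$ for some explicit $\gamma\in(0,1)$ depending on $\|V\|_{C^2}$, $T$, $L$ and the Lipschitz constants of $\varphistar,\psistar$. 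Feeding this into the log-Laplace map and using that $-\log\int\rme^{-h}\De\mathrm{q}$ has Lipschitz constant at most $\mathrm{Lip}(h)$ against $W_1$ of the underlying measures (a standard Kantorovich--Rubinstein duality argument, since $y\mapsto \rme^{-h(y)}$ is $\rme^{\mathrm{osc}(h)}\mathrm{Lip}(h)$-Lipschitz but one gets the clean bound by differentiating under the integral), one obtains
\begin{equation}
\|g^{n+1}\|_{\mathrm{Lip}} \le \gamma\,\|h^{n}\|_{\mathrm{Lip}}\eqsp, \qquad \|h^{n+1}\|_{\mathrm{Lip}} \le \gamma\,\|g^{n+1}\|_{\mathrm{Lip}}\eqsp,
\end{equation}
hence $\|h^n\|_{\mathrm{Lip}} \le \gamma^{2n}\|h^0\|_{\mathrm{Lip}}$ and $\|g^n\|_{\mathrm{Lip}} \le \gamma^{2n-1}\|h^0\|_{\mathrm{Lip}}$, which is the content of \eqref{eq:exp:conv}. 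Once the Lipschitz seminorm of $g^n - \varphistar$, resp. $h^n - \psistar$, is controlled, the sup bound follows from the zero-mean-against-$\mu$ (resp. $\nu$) normalization and the diameter bound $\delta \le L d^{1/2}$: $\sup_x|g^n(x)| = \sup_x|g^n(x) - \int g^n\De\mu| \le \mathrm{Lip}_\delta(g^n)\cdot L d^{1/2}$, which explains the prefactor $Ld^{1/2}\cconst$ in \eqref{eq:exp:conv}.

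For the gradient bound \eqref{eq:exp:conv:grad}, the plan is to differentiate the log-Laplace identity: $\nabla g^{n+1}(x) = \nabla\log\rmP_T(\rme^{-\psi^n})(x) - \nabla\log\rmP_T(\rme^{-\psistar})(x)$, and each term is, by the Hopf--Cole/SOC correspondence, the drift (value-function gradient) of the associated controlled process at time $0$. The difference of these two drifts is then estimated by a coupling argument comparing the two controlled diffusions driven by $\rme^{-\psi^n}$ and $\rme^{-\psistar}$ respectively — again reflection coupling on the torus — which bounds $\sup_x|\nabla g^{n+1}(x)|$ by a constant times $\|h^n\|_{\mathrm{Lip}}$ (the constant now carrying a $\pi$ from the equivalence $(\pi L)^{1/2}\sfd \le \delta \le \pi\sfd$, rather than a $Ld^{1/2}$, since here we compare drifts pointwise rather than integrating over the torus). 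Combining with the already-established geometric decay of $\|h^n\|_{\mathrm{Lip}}$ gives \eqref{eq:exp:conv:grad}. The main obstacle is the uniform-in-$x$ reflection-coupling contraction with an \emph{explicit} rate $\gamma$: one must control the controlled drifts (i.e. the gradients of the Schr\"odinger potentials and of the intermediate iterates) well enough — uniformly in the iteration index $n$ — that the curvature-type lower bound needed for reflection coupling survives; keeping every constant explicit as in \eqref{eq:explicit_constants}, and making sure the a priori Lipschitz bounds on $\varphi^n,\psi^n$ used to close the coupling are themselves propagated by the recursion, is the delicate part, and is presumably what Appendix~\ref{app:gio} is devoted to.
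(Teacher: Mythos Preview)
Your overall architecture is right --- identify $g^{n+1}=\varphi^{n+1}-\varphistar$ with a log-Laplace transform of $h^n=\psi^n-\psistar$ against the Schr\"odinger-bridge transition, get a one-step Lipschitz contraction via reflection coupling, and then read off both the sup-norm bound (using the zero-mean normalization and the diameter) and the gradient bound. This is exactly the paper's scheme. But two points deserve correction.

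\textbf{The Kantorovich--Rubinstein step is a genuine gap.} The functional $q\mapsto -\log\int \rme^{-h}\,\De q$ is \emph{not} $\|h\|_{\mathrm{Lip}}$-Lipschitz in $W_1$: take $q_1=\tfrac12\delta_0+\tfrac12\delta_M$, $q_2=\delta_M$, $h(x)=x$; then $W_1(q_1,q_2)=M/2$ while the log-Laplace gap is $\approx M-\log 2$, so for $M>2\log 2$ your claimed inequality fails. The ``clean bound by differentiating under the integral'' does not rescue this. What the paper actually does (Lemma~\ref{lemma:diff:lip} and the discussion around \eqref{pde:diff}--\eqref{stoch:diff}) is to observe that the \emph{difference} $\mathcal{D}^n_t\coloneqq\mathcal{U}^{T,\psi^n}_t-\mathcal{U}^{T,\psistar}_t$ solves its own HJB equation with drift $b_s=-\nabla V-\nabla\mathcal{U}^{T,\psistar}_s$ and terminal data $h^n$, hence is itself the value function of an SOC problem. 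The SOC suboptimality trick --- use the optimal control for the process started at $x$ as a \emph{suboptimal} control for the process started at $y$ --- makes the running costs cancel and yields $\mathcal{D}^n_0(y)-\mathcal{D}^n_0(x)\le\bbE[h^n(Y_T)-h^n(X_T)]$ directly. Reflection coupling is then applied to this specific pair $(X,Y)$ (in which the shared control term drops out of the drift difference, so only the semiconvexity modulus of $b_s$ matters), giving $\|\mathcal{D}^n_0\|_{\bar f_V}\le \rme^{-\blambdaV\pi^2 T}\|h^n\|_{\bar f_V}$. This is not a $W_1$-contraction of $\{q^{\varphistar}_x\}_x$ fed into KR; the HJB structure of the difference is essential.

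\textbf{The gradient step is simpler than you make it.} Once $\|g^{n+1}\|_{\bar f_V}\le\gamma\|h^n\|_{\bar f_V}$ is in hand, the paper just uses $\sup_x|\nabla g^{n+1}(x)|\le\|g^{n+1}\|_{\mathrm{Lip}}\le\pi\|g^{n+1}\|_{\bar f_V}$ (the last inequality being the norm equivalence \eqref{eq:equiv_lip_norm_f_bar}). No second coupling comparing the $\psi^n$- and $\psistar$-controlled diffusions is needed; the Lipschitz contraction already \emph{is} the gradient bound, which is precisely the paper's point that convergence of iterates is deduced \emph{from} convergence of gradients rather than the other way around.

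A minor technical note: the contraction is proved in the $\bar f_V$-Lipschitz seminorm (for a concave $\bar f_V$ tailored to the coupling), not in $\|\cdot\|_{\mathrm{Lip}}$ directly; the constant $\cconst$ absorbs the equivalence between the two.
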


As detailed in the proof of Theorem~\ref{thm:sink} given in \Cref{sec:proof}, for any potential $V$ satisfying Assumption~\ref{ass:density_mu_nu}, there exists an explicit rate $\blambdaV>0$ such that the rate $\gamma$, given by Theorem \ref{thm:sink}, can be written as  $\gamma=\rme^{-\blambdaV\pi^2 T}$. In fact, $\blambdaV$ corresponds to the ergodicity rate of the controlled diffusion when considering as underlying reference system the diffusion driven by $b_s(x)=-\nabla V(x)+\nabla\log \rmP_{T-s}e^{-\psistar}(x)$, \ie, the Schr\"odinger Bridge SDE \citep{follmer1997gantert}.

\section{Explicit convergence rates and discussion}
\label{sec:explicit_rates}

In this section, we provide explicit \textit{estimates} of $\gamma$ and $\cconst$, defined in Theorem \ref{thm:sink}, for a potential $V$ which is assumed to be $\alpha$-semiconvex for some $\alpha\leq 0$\footnote{Let us point out that we cannot expect $\alpha >0$ since we work on a compact Riemannian manifold.}, \ie, $V$ is satisfies for any $x,y \in \torusL^d$,
\begin{equation}\label{def:sin:a:conv}
\sin\biggl(\frac{\pi}{L}(x-y)\biggr)^{{\sf  T}}(\nabla V(x)-\nabla V(y))\geq\frac{\pi\,\alpha}{2L}\,\delta(x,y)^2  \, ,
\end{equation}
where the $\sin$ function applied to any vector of $\torusL^d$ as to be understood as a component-wise map applied to a representative in $[-\pi/2,+\pi/2)$.
An example of such potential is provided in Appendix~\ref{subsec:example_V}. 
For notations' sake, let us denote with $D=L\,d^{1/2}$ the diameter of the torus $\mathbb{T}_L^d$ and let $\eta_D=\exp(D^2\, |\alpha|/8)$.
Then the estimates of $\gamma$ and $\cconst$ are given by 
\bes
\label{eq:estimate_gamma}
\log\gamma\leq\, -\pi^2 T\frac{|\alpha|/4}{\eta_D-1}\,\exp\left(- D\frac{ \|U_{\mu}\|_{f_V}\vee\|U_{\nu}\|_{f_V}}{1-\exp\left(-\frac{ |\alpha|/4 }{\eta_D-1}\,\pi^2\,T\right)}\right)
\ees
and
\bes
\label{eq:estimate_c}
     \cconst\leq 2\frac{\eta_D}{\sqrt{L \pi}}\,\exp\left(D\frac{ \|U_{\mu}\|_{f_V}\vee\|U_{\nu}\|_{f_V}}{1-\exp\left(-\frac{|\alpha|/4  }{\eta_D-1}\, \pi^2\,T\right)}\right)\,,
\ees
where $f_V:  \rset_+ \to \rset_+$ is a concave and continuous function, employed in our proofs in order to get exponential contractive Lipschitz estimates, whereas 
the $f_V$-Lipschitz norm $\|\cdot\|_{f_V}$ is defined for any function $\phi : \torusL^d \to \rset$ as 
\bes 
\| \phi\|_{f_V} \coloneqq \sup_{x\neq y\in\mathbb{T}_L^d} \frac{|\phi(x)-\phi(y)|}{f_V(\delta(x,y))}\,.
\ees
The proof of these bounds is postponed to Appendix~\ref{app:explicit:computations}. 

Moreover, when considering $V=0$ we recover the classic setting when considering Brownian motions, which corresponds to the quadratic regularized OT, and the computations in Appendix \ref{app:explicit:computations} show that the rate of convergence $\gamma_0$ in the asymptotic regime $T\to 0$ behaves as
\begin{equation}\label{eq:asympt:gamma0}
\log\gamma_0\sim -\pi^2\,D_{\mu,\nu}^2\,{D^4}\,T^{-1}\,\exp(- D_{\mu,\nu}\,D^3\,T^{-1})\quad\text{ and }\quad
\cconst\sim\exp(D_{\mu,\nu}\,D^3\,T^{-1}) \eqsp.
\end{equation}
where $D_{\mu,\nu}\coloneqq \frac{1}{2\pi^2}\,\|U_{\mu}\|_{f_0}\vee\|U_{\nu}\|_{f_0}$.

The general bounds \eqref{eq:estimate_gamma} and \eqref{eq:estimate_c} (for $\alpha<0$), in the asymptotic regime  $T \to 0$ and $D \to + \infty$,  may be reduced to
\begin{align}
  |\log \gamma|&=\mathcal{O}\left(T \,\eta_D^{-1} \exp(- \eta_D \, D \,T^{-1})\right)\, ,\\
  \cconst&=\mathcal{O}(\eta_D \exp(\eta_D\, D \, T^{-1}))\, ,
\end{align}
where we omitted the constants that do not affect significantly this regime. As expected, $\gamma \to 1$ and $\cconst \to \infty$, \ie, the asymptotic regime highly slows down the convergence of Sinkhorn algorithm, especially when $d$ is large.

\section{Comparison with existing literature and original contributions}
  \label{sec:related_work}

The  Sinkhorn algorithm is very well-known and its study has been intensified particularly after \cite{cuturi2013sinkhorn}. Nonetheless, its introduction dates back to \cite{Yule12}, and it is often referred to as Iterative Proportional Fitting Procedure (IPFP).
We refer to  \cite{peyre2019computational} for an extensive overview on Entropic Optimal Transport, on Sinkhorn algorithm, on its generalizations and on their applications

 On discrete state spaces, convergence of the Sinkhorn algorithm has been proven for the first time by \cite{Sinkhorn64} and \cite{SinkhornKnopp67}. In this setting, \cite{Franklin89hilbert} show that Sinkhorn algorithm is equivalent to a sequence of iterations of a contraction in the Hilbert projective metric and prove its geometric (\ie, exponential) convergence by relying on Birkoff's theorem. We refer also to \cite{borwein1994entropy}, who focus on fixed-point problems, in settings more general than the matrix one. In particular, they consider Sinkhorn-type algorithms which turn out to be once again equivalent to iterations of a contraction in the Hilbert projective metric.
  
 The continuous counterpart of the Hilbert metric has already been investigated in \cite{chen2016hilbertmetric} and in \cite{deligiannidis2021quantitative}. In the latter, the authors provide quantitative stability estimates of IPFP on compact metric spaces, from which its convergence can be deduced. Even though these original approaches provide also quantitative rates of convergence, they badly scale when applied to a multimarginal Optimal Transport setting. Recently, new ideas from convex theory have been introduced in order to tackle the convergence of Sinkhorn algorithm in the multimarginal setting too, for bounded costs (or equivalently compact spaces). Along this line of research, it is worth mentioning \cite{carlier2020differential} where the authors prove well-posedness of Sinkhorn iterates and their smooth dependence from the marginals. In addition,  \cite{marinogerolin2020} have proven an $\mrl^p$ (qualitative) convergence of Sinkhorn iterates, and \cite{Carlier22multisink} which improves the previous results by showing an exponential convergence (with a rate that scales linearly with the number of marginals).   

Regarding the primal formulation and the convergence of $(\pi^{n,n})_{n \in\nset}$ defined in \eqref{eq:primal_pb} to the optimal coupling, \cite{NutzWiesel:qualitative:Sinkhorn} establish  qualitative convergence in total variation. Following this work, \cite{eckstein2021quantitative} show quantitative (polynomial) convergence in Wasserstein distance.  Lastly, \cite{ghosal2022nutz} derive  polynomial linear convergence (\ie, of order $O(1/n)$) with respect to a symmetric relative entropy.

\bigskip
\noindent\textbf{Original Contribution.} In this paper we provide a new approach in the study of Sinkhorn algorithm on the $d$-dimensional torus $\mathbb{T}_L^d$ and its main novelties can be summarized as follows.
\begin{itemize}
\item Our proofs rely on probabilistic arguments and coupling methods, by exploiting the connection between the Schr\"odinger potentials and value functions of stochastic optimal control problems. To the best of our knowledge, this is the first paper addressing the problem relying on a (non-trivial) stochastic interpretation, while the existing literature usually relies on convex analysis and/or on the Hilbert metric. Moreover, this probabilistic approach via stochastic optimal control could in principle be carried over to the unbounded case (e.g. in the Euclidean space $\bbRD$).
Here we specify our results on the torus since it allows us to work on a compact state space while benefiting from its underlying Euclidean structure. However
 our approach could be extended to smooth compact manifolds without boundaries but at the expense of technicalities, in particular the definition of an appropriate coupling by reflection. Dealing with the torus allows us to reduce these complications at the bare minimum 
avoid technical details related to general bounded compact state spaces for which topological conditions generally have to be imposed.

\item We prove the convergence of Sinkhorn iterates as a corollary of the convergence of their gradients (or equivalently in Lipschitz norms). To the best of our knowledge, our result is the first one addressing the problem directly at the level of the gradients.
Moreover, our probabilistic approach provides Lipschitz estimates 
along solutions of Hamilton-Jacobi-Bellman equations (\ie, for any time $s\in[0,T]$; see \eqref{lip:bound}). 

Our results should be compared to   \cite{deligiannidis2021quantitative} where Lipschitz estimates close to ours are given,  but for iterates $(f_n,g_n)_{n \in\nset}$ corresponding to $f_n = \rmP_T\,\rme^{-\psi^n}$ and $g_n=\rmP_T\,\rme^{-\varphi^n}$. To show their result,  \cite{deligiannidis2021quantitative} rely on Birkoff's theorem for the Hilbert metric since the iterations they consider are then just linear updates. Note that convergence of the gradient of $(\varphi_n,\psi_n)_{n\in\nset}$ cannot be deduce from their result since $\varphi_n$ and $\psi_h$ are non-linear transformation of $g_n$ and $f_n$ respectively.  

\item We get an exponential rate of convergence $\gamma=\rme^{-\blambdaV\,\pi^2\,T}$ which converges to $1$ as $T\downarrow 0$. This exponential dependence on $T$ is not surprising. Indeed it is well known that convergence of Sinkhorn algorithm implies quantitative stability (continuous) bounds for Schr\"odinger problem (and entropically regularized optimal transport, see \cite{eckstein2021quantitative}) while on the contrary the optimal transport map is solely $1/2$-H\"older continuous by \cite{gigli2011holder}.
\end{itemize}

  \section{Sketch of the proof}\label{sec:proof}

  We now introduce the main components of our method to analyse the convergence of the Sinkhorn iterates given by \eqref{eq:sinkhorn_iterate}.
  We first introduce the function $\{\mathcal{U}^{T,h}_t\}_{t \in [0,T]}$ defined for any measurable and bounded function $h: \torusL^d \to \rset$:
  \begin{equation}
    \label{eq:def_value_function_exp}
    \mathcal{U}^{T,h}_t = - \log \rmP_{T-t} \rme^{-h} \eqsp,
  \end{equation}
which can be shown, by a direct computation, to correspond to the solution of the Hamilton-Jacobi-Bellman (hereafter HJB) equation defined by
\begin{equation}\label{HJB}
\begin{cases}
\partial_t u_t+\frac12\Delta u_t-\nabla V\cdot\nabla u_t-\frac12 |\nabla u_t|^2=0\\
u_T=h\, .
\end{cases}
\end{equation}
With these notations,
Sinkhorn iterates can be written as
\begin{equation}
    \label{eq:sinkhorn_iterates}
\varphi^{n+1}= U_{\mu}-\mathcal{U}^{T,\psi^n}_0\eqsp, \qquad \psi^{n+1} = U_{\nu}-\mathcal{U}^{T,\varphi^{n+1}}_0 \eqsp.
\end{equation}
To get some bounds on the Lipschitz constant of $\varphi^{n+1}$ and $\psi^{n+1}$, we then show that if $h: \torusL^d \to \rset$ is Lipschitz, $\mathcal{U}^{T,h}_0$ is also Lipschitz with an explicit bound for its Lipschitz constant.

To do so, we use that $\mathcal{U}^{T,h}_t$ can be represented also as the value function of the SOC problem
\begin{equation}
  \label{eq:soc_main}
\begin{split}
  \mathcal{U}^{T,h}_t(x)= \inf_{q\in \mathcal{A}_{[t,T]}} \bbE\biggl[\frac{1}{2}\int_t^T|q_s|^2\De s +h(X^q_T)\biggr]\,
  \\
  \text{ where }\,
\begin{cases}\De X^q_s=(-\nabla V(X^q_s)+q_s)\De s + \De B_s\\
X^q_t=x\,,\end{cases}
\end{split}
\end{equation}
where $(B_s)_{s \geq 0}$ is a $(\mcf_s)_{s \geq 0}$-Brownian motion on $\torusL^d$  defined on the filtered probability space $(\Omega,\PP,\mcf,(\mcf_s)_{s \geq 0})$ satisfying the usual conditions, and $\mathcal{A}_{[t,T]}$ denotes the set of admissible controls, \ie, $(\mcf_s)_{s \geq 0}$-progressively measurable processes.  We provide a precise statement of this result in Proposition \ref{app:prop:contr} in Appendix \ref{app:gio}, 
where we show the optimal control process to be the feedback-process $q_s=-\nabla\mathcal{U}_s^{T,h}(X_s^q)$. Moreover, Proposition \ref{app:prop:contr} in Appendix \ref{app:gio} provides a non-trivial control of $\|\mathcal{U}^{T,h}_t\|_{\mathrm{Lip}}$ for any function $h\in \mathcal{C}^3(\mathbb{T}_L^d)$. We give here the main ideas of the proof of this result.

We first show that for any pair of stochastic processes $(X_s,Y_s)_{s \in [t,T]}$, starting from $X_t=x$ and $Y_t=y$ respectively, solution of
\begin{align}
  \label{eq:2}
  \De X_s&=-\nabla V (X_s)\De s-\nabla \mathcal{U}^{T,h}_s(X_s)\De s+\De B_s\eqsp, \\
  \De Y_s&= - \nabla V(Y_s) \rmd s -\nabla \mathcal{U}^{T,h}_s(X_s)\De s+\De \tilde{B}_s \eqsp,
\end{align}
where $(\tilde{B}_s)_{s \geq 0}$ is a $(\mcf_s)_{s \geq 0}$-Brownian motion on $\torusL^d$ defined on $(\Omega,\mcf,\PP)$, it holds by \eqref{eq:soc_main},
 \begin{equation}
 \mathcal{U}^{T,h}_t(y)-\mathcal{U}^{T,h}_t(x)\leq
 \mathbb{E}\biggl[h(Y_T)-h(X_T)\biggr] \eqsp.
\end{equation}
Then, if $h$ is Lipschitz, we consider a particular coupling which is adapted from the usual  coupling by reflection for homogeneous diffusion \citep{wang1994neumann,eberle2016reflection}  to bound $ \mathcal{U}^{T,h}_t(y)-\mathcal{U}^{T,h}_t(x)$. In particular, the novelty of our approach relies in employing coupling by reflection techniques for controlled diffusion processes on the torus, endowed with the distance $\delta$ given in \eqref{def:delta}, that defines a smooth distance on $\mathbb{T}_L^d$ which is equivalent to the Riemannian distance $\sfd$. An adaptation of the coupling by reflection techniques under this sine-distance is given in Appendix \ref{app:gio}. Owing to the construction given there,
we obtain for any $t\in[0,T]$ and any $h : \torus_L^d \to \rset$, Lipschitz 
\begin{equation}
\| \mathcal{U}^{T,h}_t\|_{f_V}\leq \rme^{-\lambda_V \,\pi^2\,(T-t)}\|h\|_{f_V}
\label{eq:lip_U_main}
\end{equation}
where the rate $\lambda_V >0$ and the function $f_V:  \rset_+ \to \rset_+$, which is concave and continuous, are defined in \eqref{def:f} and \eqref{def:lambda:C}. 
Moreover, we prove in Proposition \ref{app:costruzione:f} in Appendix \ref{app:gio} that $f_V$ is equivalent to the identity. Therefore, $\| \cdot \|_{f_V}$ is equivalent to the usual Lipschitz norm $ \|\cdot \|_{\mathrm{Lip}}$ (\ie, with $f_V$ being the identity and considering the flat-distance) since $\delta(\cdot,\cdot)$ is equivalent to $\sfd(\cdot,\cdot)$. In particular, we have for any function $\phi : \torusL^d \to \rset$
\begin{equation}
\frac{1}{\pi}\,\|\phi\|_{\mathrm{Lip}}\leq \|\phi\|_{f_V}\leq \frac{C_V^{-1}}{\sqrt{L\eqsp \pi}}\,\|\phi\|_{\mathrm{Lip}}\eqsp.
\label{eq:equiv_lip_norm}
\end{equation}
where $C_V$ is defined in \eqref{def:lambda:C}. By combining \eqref{eq:lip_U_main} with \eqref{eq:equiv_lip_norm}, we are then able to bound $\|\mathcal{U}^{T,h}_t\|_{\mathrm{Lip}}$ as
\begin{equation}\label{eq:contr:type}
\| \mathcal{U}^{T,h}_t\|_{\mathrm{Lip}}\leq C_V^{-1}\sqrt{\frac{\pi}{L}}\,\rme^{-\lambda_V \,\pi^2\,(T-t)}\|h\|_{\mathrm{Lip}}
\end{equation}

It is then possible studying how the Lipschitzianity propagates along Sinkhorn iterates using the following result.

\begin{lemma}\label{lemma:fnorm_bound}
  Assume Assumption~\ref{ass:density_mu_nu}. 
For all $n\geq 0$ we have
\be\label{eq:fnorm_bound1}
\begin{split}
\|\varphi^{n+1}\|_{f_V} \leq \|U_{\mu} \|_{f_V}+   \rme^{-\lambda_V\,\pi^2\, T} \|\psi^n\|_{f_V}\\
\|\psi^{n+1}\|_{f_V} \leq \|U_{\nu} \|_{f_V}+   \rme^{-\lambda_V \,\pi^2\,T} \|\varphi^{n+1}\|_{f_V}\,
\end{split}
\ee
Moreover, for all $n\geq 1$ we have
\be\label{eq:fnorm_bound2}
\begin{split}
\|\psi^n \|_{f_V} \leq \frac{\|U_{\nu}\|_{f_V}+\exp(-\lambda_V \,\pi^2\,T)\|U_{\mu}\|_{f_V}}{1-\exp(-2\lambda_V\,\pi^2\, T)} \\
\|\varphi^n \|_{f_V}\leq  \frac{\|U_{\mu}\|_{f_V}+\exp(-\lambda_V\,\pi^2\, T)\|U_{\nu}\|_{f_V}}{1-\exp(-2\lambda_V \,\pi^2\,T)},
\end{split}
\ee
\end{lemma}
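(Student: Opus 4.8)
The plan is to feed the variational representation \eqref{eq:sinkhorn_iterates} of the Sinkhorn iterates into the one-step contraction estimate \eqref{eq:lip_U_main} for the Hopf--Cole transforms $\mathcal{U}^{T,h}_0=-\log\rmP_T\rme^{-h}$, and then to convert the resulting coupled system of inequalities into bounds uniform in $n$ by unrolling it and summing a geometric series. Two preliminary remarks are in order. First, $\phi\mapsto\|\phi\|_{f_V}$ is a seminorm: subadditivity is immediate since $f_V\geq 0$ and $\|\cdot\|_{f_V}$ is a supremum of difference quotients in $\phi$. Second, every iterate is Lipschitz with finite $f_V$-seminorm, so that \eqref{eq:lip_U_main} is applicable to it. Indeed, $U_{\mu},U_{\nu}\in\mathcal{C}^1(\torusL^d)$ are Lipschitz on the compact torus, hence $\|U_{\mu}\|_{f_V},\|U_{\nu}\|_{f_V}<\infty$ by \eqref{eq:equiv_lip_norm}; and starting from $\psi^0\equiv 0$, an induction on $n$ shows that each $\varphi^n,\psi^n$ is $\mathcal{C}^1$: whenever $h$ is continuous, $\rme^{-h}$ is bounded and bounded away from $0$ on $\torusL^d$, so $\rmP_T\rme^{-h}(x)=\int p_T(x,y)\rme^{-h(y)}\mm(\rmd y)$ is smooth and bounded away from $0$ by the regularity and positivity of $p_T$ \citep{kent1978time}, whence $\log\rmP_T\rme^{-h}$ is smooth and $\varphi^{n+1}=U_{\mu}+\log\rmP_T\rme^{-\psi^n}$, $\psi^{n+1}=U_{\nu}+\log\rmP_T\rme^{-\varphi^{n+1}}$ inherit the $\mathcal{C}^1$ regularity of $U_{\mu}$, $U_{\nu}$, so they are Lipschitz with finite $f_V$-seminorm.

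Granted this, \eqref{eq:fnorm_bound1} is immediate. From $\varphi^{n+1}=U_{\mu}-\mathcal{U}^{T,\psi^n}_0$, the triangle inequality for $\|\cdot\|_{f_V}$ and the estimate \eqref{eq:lip_U_main} at $t=0$ applied with $h=\psi^n$ yield $\|\varphi^{n+1}\|_{f_V}\leq\|U_{\mu}\|_{f_V}+\|\mathcal{U}^{T,\psi^n}_0\|_{f_V}\leq\|U_{\mu}\|_{f_V}+\rme^{-\lambda_V\pi^2 T}\|\psi^n\|_{f_V}$, and the bound on $\|\psi^{n+1}\|_{f_V}$ follows identically from $\psi^{n+1}=U_{\nu}-\mathcal{U}^{T,\varphi^{n+1}}_0$.

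For \eqref{eq:fnorm_bound2}, write $\rho\coloneqq\rme^{-\lambda_V\pi^2 T}\in(0,1)$, $a\coloneqq\|U_{\mu}\|_{f_V}$, $b\coloneqq\|U_{\nu}\|_{f_V}$. Chaining the two inequalities of \eqref{eq:fnorm_bound1} gives, for $n\geq 1$, $\|\psi^n\|_{f_V}\leq b+\rho\|\varphi^n\|_{f_V}\leq b+\rho a+\rho^2\|\psi^{n-1}\|_{f_V}$; iterating this down to $\|\psi^0\|_{f_V}=0$ gives $\|\psi^n\|_{f_V}\leq(b+\rho a)\sum_{k=0}^{n-1}\rho^{2k}\leq(b+\rho a)/(1-\rho^2)$, which is the first line of \eqref{eq:fnorm_bound2}. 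Symmetrically, $\|\varphi^1\|_{f_V}\leq a$ and $\|\varphi^n\|_{f_V}\leq a+\rho b+\rho^2\|\varphi^{n-1}\|_{f_V}$ for $n\geq 2$, so $\|\varphi^n\|_{f_V}\leq(a+\rho b)/(1-\rho^2)$, which is the second line. There is no real obstacle at this stage: the analytic heart, namely the contraction estimate \eqref{eq:lip_U_main} obtained by coupling by reflection for controlled diffusions, is assumed, and the present lemma is the bookkeeping that promotes a one-step contraction to bounds uniform in $n$; the only points requiring a little care are the finiteness of the $f_V$-seminorms of all iterates (handled by the regularity induction) and the base cases in unrolling the two-step recursion.
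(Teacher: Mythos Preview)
Your proof is correct and follows the same strategy as the paper: apply the one-step contraction \eqref{eq:lip_U_main} to the representation \eqref{eq:sinkhorn_iterates}, use the triangle inequality for $\|\cdot\|_{f_V}$, and then unroll the resulting two-step recursion. The paper's own proof is terser---it omits the regularity discussion ensuring finiteness of the $f_V$-seminorms and simply says ``by induction'' where you explicitly sum the geometric series---but the content is the same.
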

\begin{proof}
As shown in Proposition \ref{app:prop:contr} in Appendix \ref{app:gio} (see also \eqref{eq:lip_U_main}), the Lipschitz-regularity backward propagates along solutions of HJB equations. Particularly, it holds 
\bes
\| \mathcal{U}^{T,\psi^n}_0\|_{f_V}\leq \|\psi^n\|_{f_V}\, \rme^{-\lambda_V\,\pi^2\, T} \, ,
\ees
which, combined with \eqref{eq:sinkhorn_iterates} and an application of the triangular inequality, gives the first claim in \eqref{eq:fnorm_bound1}. The second claim follows by symmetry. Concatenating these two bounds, we obtain
\bes
\|\psi^{n+1}\|_{f_V} \leq \|U_{\nu} \|_{f_V}+   \rme^{-\lambda_V \,\pi^2\,T}\|U_{\mu}\|_{f_V}+\rme^{-2\lambda_V \,\pi^2\,T}\|\psi^{n} \|_{f_V},
 \ees
from which the first relation in \eqref{eq:fnorm_bound2} follows by induction. The second relation follows by symmetry.
\end{proof}

 \begin{remark}
Let us also point out that the Lipschitz estimates obtained in Lemma \ref{lemma:fnorm_bound}, as well as the ones proven in Lemma \ref{lemma:diff:lip} below, hold true also for the normalized Sinkhorn iterates $\varphi^{\diamond n},\psi^{\diamond n}$ (and for any other trivial additive perturbation of them). Indeed any additive normalization would cancel out when considering Lipschitz norms.
\end{remark}

From the pointwise convergence of the normalized Sinkhorn iterates $\varphi^{\diamond n},\psi^{\diamond n}$ towards the Schr\"odinger potentials (which in our compact and smooth setting is guaranteed from the geometric $\rml^p$ convergence in \cite{marinogerolin2020}), the previous regularity result propagates to the potentials and using \eqref{eq:equiv_lip_norm},  we obtain the following corollary.
\begin{corollary}
  Assume Assumption~\ref{ass:density_mu_nu}. Then it holds 
\be\label{eq:fnorm_bound3}
\begin{split}
\|\psistar\|_{f_V} \leq \frac{\|U_{\nu}\|_{f_V}+\exp(-\lambda_V\,\pi^2\, T)\|U_{\mu}\|_{f_V}}{1-\exp(-2\lambda_V \,\pi^2\,T)} \\
\|\varphistar\|_{f_V}\leq  \frac{\|U_{\mu}\|_{f_V}+\exp(-\lambda_V \,\pi^2\,T)\|U_{\nu}\|_{f_V}}{1-\exp(-2\lambda_V \,\pi^2\,T)}
\end{split}
\ee
and therefore the Lipschitz norm of the Schr\"odinger potentials can be bounded as
\bes
\begin{split}
\|\psistar\|_{\mathrm{Lip}} \leq \pi\,\frac{\|U_{\nu}\|_{f_V}+\exp(-\lambda_V \,\pi^2\,T)\|U_{\mu}\|_{f_V}}{1-\exp(-2\lambda_V\,\pi^2\, T)} \\
\|\varphistar\|_{\mathrm{Lip}}\leq  \pi\,\frac{\|U_{\mu}\|_{f_V}+\exp(-\lambda_V \,\pi^2\,T)\|U_{\nu}\|_{f_V}}{1-\exp(-2\lambda_V\,\pi^2\, T)}\,.
\end{split}
\ees
\end{corollary}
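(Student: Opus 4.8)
The plan is to obtain the bounds \eqref{eq:fnorm_bound3} by passing the uniform-in-$n$ estimate \eqref{eq:fnorm_bound2} to the limit $n\to\infty$, using that the seminorm $\|\cdot\|_{f_V}$ is lower semicontinuous with respect to pointwise convergence. First I would observe that, since any additive normalization cancels out in the $f_V$-seminorm, the bound \eqref{eq:fnorm_bound2} holds verbatim with $\psi^n$ and $\varphi^n$ replaced by the normalized iterates $\psi^{\diamond n}$ and $\varphi^{\diamond n}$. Next I would invoke the pointwise convergence $\psi^{\diamond n}\to\psistar$ and $\varphi^{\diamond n}\to\varphistar$ on $\torusL^d$: in the present compact and smooth setting this is guaranteed by the geometric $\mathrm{L}^p$ convergence of \cite{marinogerolin2020}, where the uniform Lipschitz bounds of Lemma~\ref{lemma:fnorm_bound} let one upgrade $\mathrm{L}^p$ convergence along a subsequence (via Arzel\`a--Ascoli) to uniform convergence, and then identify the full-sequence limit with the Schr\"odinger potentials.

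The heart of the argument is the elementary lower semicontinuity: for any fixed $x\neq y$ in $\torusL^d$,
\[
\frac{|\psistar(x)-\psistar(y)|}{f_V(\delta(x,y))}=\lim_{n\to\infty}\frac{|\psi^{\diamond n}(x)-\psi^{\diamond n}(y)|}{f_V(\delta(x,y))}\leq\liminf_{n\to\infty}\|\psi^{\diamond n}\|_{f_V}\,,
\]
so that, taking the supremum over $x\neq y$, $\|\psistar\|_{f_V}\leq\liminf_{n\to\infty}\|\psi^{\diamond n}\|_{f_V}$; combining this with the normalized version of \eqref{eq:fnorm_bound2} gives the first line of \eqref{eq:fnorm_bound3}, and the bound on $\|\varphistar\|_{f_V}$ follows identically. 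The two estimates on the Lipschitz norms are then immediate: multiply through by $\pi$ and use the left inequality in \eqref{eq:equiv_lip_norm}, \ie $\|\phi\|_{\mathrm{Lip}}\leq\pi\,\|\phi\|_{f_V}$.

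The only genuinely nontrivial ingredient is the pointwise convergence of the normalized iterates towards $\varphistar$ and $\psistar$; once this is in hand, the corollary reduces to lower semicontinuity of a seminorm under pointwise limits together with the norm equivalence \eqref{eq:equiv_lip_norm}, both routine. Since this convergence is supplied by \cite{marinogerolin2020} and the iterates are uniformly Lipschitz by Lemma~\ref{lemma:fnorm_bound}, the obstacle is mild --- in particular no coupling argument is needed in this corollary, such arguments entering only through the contraction \eqref{eq:lip_U_main} used inside Lemma~\ref{lemma:fnorm_bound}.
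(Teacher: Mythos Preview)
Your proposal is correct and follows essentially the same approach as the paper. The paper simply states that the pointwise convergence of the normalized iterates $\varphi^{\diamond n},\psi^{\diamond n}$ to $\varphistar,\psistar$ (guaranteed in this compact smooth setting by the geometric $\mathrm{L}^p$ convergence of \cite{marinogerolin2020}) lets the regularity bounds of Lemma~\ref{lemma:fnorm_bound} propagate to the potentials, with \eqref{eq:equiv_lip_norm} yielding the Lipschitz estimates; you have spelled out the lower-semicontinuity and Arzel\`a--Ascoli details that make this propagation rigorous, but the argument is the same.
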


We are now ready to prove the key contraction estimates, from which our main result follows. Once again the main idea behind our proof is relying on a stochastic control problem where the Schr\"odinger potential contributes in the final cost while its gradient drives the controlled SDE. This allows to back-propagate along an HJB equation the Lipschitz regularity of the difference between the Sinkhorn iterates and the target Schr\"odinger potential. Indeed, if we denote with $\mathcal{D}_t^n\coloneqq \mathcal{U}_t^{T,\psi^n}-\mathcal{U}_t^{T,\psistar}$ (the difference between the evolutions along HJB of $\psi^n$ and $\psistar$ respectively) from \eqref{HJB} we deduce that it solves
\begin{equation}\label{pde:diff}
\begin{cases}
\partial_t u_t+\frac12\Delta u_t+(-\nabla V-\nabla \mathcal{U}^{T,\psistar}_t)\cdot\nabla u_t-\frac12 |\nabla u_t|^2=0\\
u_T=\psi^n-\psistar\,,
\end{cases}
\end{equation}
 which can be represented (see Proposition \ref{app:prop:contr} in Appendix \ref{app:gio}) as the value function of the stochastic control problem
\be\label{stoch:diff}
\begin{split}
\mathcal{D}^n_t(x)=&\, \inf_{q_\cdot} \bbE\biggl[\frac{1}{2}\int_t^T|q_s|^2\De s +\psi^n(X^q_T)-\psistar(X^p_T)\biggr]\\
\quad\text{where}&\quad
\begin{cases}\De X^q_s=(-\nabla V(X^q_s)-\nabla\mathcal{U}^{T,\psistar}_s (X^q_s)+q_s)\De s + \De B_s\\
X^q_t=x\,.\end{cases}
\end{split}
\ee
Once the connection with the stochastic optimal control formulation is established, the proof boils down once again in studying how Lipschitz-regularity backward propagates along solutions of HJB equations. 

\begin{lemma}\label{lemma:diff:lip}
  Assume Assumption~\ref{ass:density_mu_nu}. 
There exist $\blambdaV >0$, given by \eqref{eq:lambda_bar} in Appendix~\ref{app:proof_lemm}, and a continuous concave function $\bar f_V :\rset_+ \to \rset_+$ such that 
\be\label{eq:grad_exp_2}
\begin{split}
    \|\psi^{n+1}-\psistar\|_{\bar f_V}\leq  \exp(-\blambdaV \,\pi^2\,T)\| \varphi^{n+1}-\varphistar\|_{\bar f_V}\\
\|\varphi^{n+1}-\varphistar\|_{\bar f_V}\leq  \exp(-\blambdaV\,\pi^2\, T)\| \psi^{n}-\psistar\|_{\bar f_V}\,.
\end{split}
\ee
As a result
\be\label{eq:grad_exp_3}
\begin{split}
\|\psi^{n+1}-\psistar\|_{\bar f_V}\leq  \exp(-2\blambdaV \,\pi^2\,T)\| \psi^{n}-\psistar\|_{\bar f_V}\\
\|\varphi^{n+1}-\varphistar\|_{\bar f_V}\leq  \exp(-2\blambdaV \,\pi^2\,T)\| \varphi^{n}-\varphistar\|_{\bar f_V} \eqsp.
\end{split}
\ee
\end{lemma}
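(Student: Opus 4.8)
The plan is to run the argument that produced \eqref{eq:lip_U_main}, but applied to the \emph{difference} $\mathcal{D}^n_t=\mathcal{U}^{T,\psi^n}_t-\mathcal{U}^{T,\psistar}_t$ instead of a single HJB evolution: the point is that $\mathcal{D}^n$ solves \eqref{pde:diff}, whose linearised drift is the Schr\"odinger Bridge drift $b_s(x)=-\nabla V(x)-\nabla\mathcal{U}^{T,\psistar}_s(x)$, and Lipschitz-type regularity back-propagates along this equation with an explicit contraction factor obtained from coupling by reflection.

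First I would record the algebraic identity linking the statement to $\mathcal{D}^n$. Since $(\varphistar,\psistar)$ is a fixed point of \eqref{eq:sinkhorn_iterate}, we have $\varphistar=U_{\mu}-\mathcal{U}^{T,\psistar}_0$ and $\psistar=U_{\nu}-\mathcal{U}^{T,\varphistar}_0$; combining with \eqref{eq:sinkhorn_iterates} gives $\varphi^{n+1}-\varphistar=-\mathcal{D}^n_0$ and $\psi^{n+1}-\psistar=-(\mathcal{U}^{T,\varphi^{n+1}}_0-\mathcal{U}^{T,\varphistar}_0)$. Hence it suffices to prove the single contraction $\|\mathcal{D}^n_0\|_{\bar f_V}\leq\rme^{-\blambdaV\pi^2 T}\|\psi^n-\psistar\|_{\bar f_V}$ together with its analogue with $(\mu,\varphi)$ and $(\nu,\psi)$ interchanged, and then concatenate.

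For the contraction I would use the stochastic control representation \eqref{stoch:diff} of $\mathcal{D}^n$ from Proposition~\ref{app:prop:contr}. Fix $x,y\in\torusL^d$, take the optimal feedback control $q_s=-\nabla\mathcal{D}^n_s(X_s)$ of the problem started at $x$, and insert it unchanged into the dynamics started at $y$; since this extra drift is common to both processes, the pair $(X_s,Y_s)$ differs only through the reference drift $b_s$, and by optimality of $q$ at $x$ versus sub-optimality at $y$ one gets, exactly as in the derivation preceding \eqref{eq:lip_U_main}, $\mathcal{D}^n_0(y)-\mathcal{D}^n_0(x)\leq\bbE[(\psi^n-\psistar)(Y_T)-(\psi^n-\psistar)(X_T)]$. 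I then couple $\tilde B$ to $B$ by the reflection coupling for controlled diffusions on $(\torusL^d,\delta)$ built in Appendix~\ref{app:gio}: because $b_s=-\nabla V-\nabla\mathcal{U}^{T,\psistar}_s$ and $\|\mathcal{U}^{T,\psistar}_s\|_{\mathrm{Lip}}$ is controlled via \eqref{eq:contr:type} (applied with $h=\psistar$) together with the Corollary's bound on $\|\psistar\|_{\mathrm{Lip}}$, the drift $b_s$ has an explicitly bounded one-sided modulus, and the construction yields a concave continuous profile $\bar f_V$ (equivalent to the identity) and a rate $\blambdaV>0$, given by \eqref{eq:lambda_bar}, such that $\bbE[\bar f_V(\delta(X_s,Y_s))]\leq\rme^{-\blambdaV\pi^2(s-t)}\bar f_V(\delta(X_t,Y_t))$. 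Bounding $(\psi^n-\psistar)(Y_T)-(\psi^n-\psistar)(X_T)\leq\|\psi^n-\psistar\|_{\bar f_V}\,\bar f_V(\delta(X_T,Y_T))$, taking expectations and dividing by $\bar f_V(\delta(x,y))$ gives the second line of \eqref{eq:grad_exp_2}; the first line follows by the symmetric argument (reference drift $-\nabla V-\nabla\mathcal{U}^{T,\varphistar}_s$, controlled through $\|\varphistar\|_{\mathrm{Lip}}$), taking the smaller of the two rates as $\blambdaV$. Concatenating the two inequalities of \eqref{eq:grad_exp_2} then yields \eqref{eq:grad_exp_3}.

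The main obstacle is the coupling-by-reflection step: one must check that the reflection coupling can be run for the \emph{time-inhomogeneous, controlled} diffusion with drift $b_s$ on the torus endowed with the smooth sine-distance $\delta$, and, crucially, that the contraction rate $\blambdaV$ is strictly positive with the explicit form \eqref{eq:lambda_bar}. This needs (i) a uniform bound on the semiconvexity modulus of $b_s$ — which is only as regular as $\psistar$ (resp. $\varphistar$), so this is where the qualitative $\mrl^p$-convergence of \cite{marinogerolin2020} enters through the Corollary — and (ii) the design of the auxiliary concave profile $\bar f_V$ so that the Brownian second-order term dominates the drift contribution at every scale up to the torus diameter $D=Ld^{1/2}$; these are precisely the computations carried out in Appendix~\ref{app:gio} and Appendix~\ref{app:proof_lemm}.
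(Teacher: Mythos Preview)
Your proposal is correct and follows the paper's proof essentially verbatim: reduce to the contraction $\|\mathcal{D}^n_0\|_{\bar f_V}\le\rme^{-\blambdaV\pi^2 T}\|\psi^n-\psistar\|_{\bar f_V}$, represent $\mathcal{D}^n$ via the SOC problem \eqref{stoch:diff} with reference drift $b_s=-\nabla V-\nabla\mathcal{U}^{T,\psistar}_s$, bound the semiconvexity modulus of $b_s$ through the Corollary's control on $\|\psistar\|_{f_V}$, and invoke the reflection-coupling estimate of Proposition~\ref{app:prop:contr}. One small refinement: rather than running the two directions with distinct drifts and ``taking the smaller of the two rates'', the paper fixes a single symmetrized modulus $\bar\kappa_V(r)=\kappa_V(r)-\tfrac{4}{r}\,\tfrac{\|U_\mu\|_{f_V}\vee\|U_\nu\|_{f_V}}{1-\exp(-\lambda_V\pi^2 T)}$, so that the \emph{same} pair $(\bar f_V,\blambdaV)$ serves for both lines of \eqref{eq:grad_exp_2}; this is what makes the concatenation \eqref{eq:grad_exp_3} immediate without picking up extra norm-comparison constants.
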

\begin{proof}
  The proof is postponed to Appendix~\ref{app:proof_lemm}.
\end{proof}

We can now complete the proof of Theorem \ref{thm:sink}.\\

\begin{proof}[Proof of Theorem \ref{thm:sink}.]\\
Since $\int\varphi^{\diamond n}\De\mu=\int\varphistar\De\nu$ (see \eqref{eq:equality_int}), uniformly on $x \in\mathbb{T}_L^d$, it holds
\begin{align}
|\varphi^{\diamond n}(x)-\varphistar(x)|&=\biggl|\varphi^{\diamond n}(x)-\int_{\mathbb{T}_L^d}\varphi^{\diamond n}\,\De\mu-\varphistar(x)+\int_{\mathbb{T}_L^d}\varphistar\,\De\mu\biggr|\\
&= \biggl|\int_{\mathbb{T}_L^d}\biggl[(\varphi^n-\varphistar)(x)-(\varphi^n-\varphistar)(y)\biggr]\De \mu(y)\biggr| \\
&
\leq\int_{\mathbb{T}_L^d}\biggl|(\varphi^n-\varphistar)(x)-(\varphi^n-\varphistar)(y)\biggr|\De \mu(y)\\
&\leq \|\varphi^n-\varphistar\|_{\bar f_V} \int_{\mathbb{T}_L^d} \bar f_V(\delta(x,y))\,\De\mu(y)\\
&\leq L\eqsp d^{1/2}\,\|\varphi^n-\varphistar\|_{\bar f_V}\,.
\end{align}
Therefore, by concatenating \eqref{eq:grad_exp_3} along $n$ iterates, we end up with
\bes
\begin{split}
\sup_{x\in\mathbb{T}_L^d} |\varphi^{\diamond\,n+1}(x)-\varphistar(x)|\leq &\,L \, d^{1/2} \,\exp(-2\,n\,\blambdaV \,\pi^2\,T)\|\varphi^{1}(x)-\varphistar(x)\|_{\bar f_V}\\
\leq&\, L \, d^{1/2} \,\exp(-(2\,n+1)\,\blambdaV \,\pi^2\,T)\|\psi^{0}(x)-\psistar(x)\|_{\bar f_V} \eqsp .
\end{split}
\ees
By reasoning in the same fashion, since $\int\psi^{\diamond n}\De\mu=\int\psistar\De\nu$ or simply by relying on \eqref{eq:fnorm_bound2}, we conclude that
\bes 
\sup_{x\in\mathbb{T}_L^d} |\psi^{\diamond n}(x)-\psistar(x)|
\leq L \, d^{1/2} \,\exp(-2\,n\,\blambdaV \,\pi^2\,T)\|\psi^{0}(x)-\psistar(x)\|_{\bar f_V}\,.
\ees
Using \eqref{eq:equiv_lip_norm_f_bar}, we may conclude the proof of \eqref{eq:exp:conv} by setting
\begin{equation}\label{eq:explicit_constants}
    \gamma = \exp(-\blambdaV \,\pi^2\,T) \text{ and } \cconst= \bar{C}_V^{-1}/\sqrt{L \, \pi} \eqsp ,
\end{equation}
where  $\blambdaV$ and $\bar{C}_V$ are respectively defined at \eqref{eq:lambda_bar} and \eqref{eq:equiv_lip_norm_f_bar}.

The proof of the convergence of the gradients can be obtained in a similar fashion since
\bes
\sup_{x\in\mathbb{T}_L^d}|\nabla\varphi^{\diamond n}-\nabla\varphistar|(x)=\sup_{x\in\mathbb{T}_L^d}|\nabla\varphi^{ n}-\nabla\varphistar|(x)\leq\|\varphi^{ n}-\varphistar\|_{\mathrm{Lip}}\leq \pi\|\varphi^{ n}-\varphistar\|_{\bar f_V}
\ees
and similarly for $\psi^{\diamond n}-\psistar$,
from which \eqref{eq:exp:conv:grad} follows by concatenating the contraction in \eqref{eq:grad_exp_3}.
\end{proof}

\section{Conclusion}\label{sec:conclu}
In this paper, we have introduced a new probabilistic approach in the study of Sinkhorn algorithm. We have shown that each iteration is equivalent to solving an Hamilton-Jacobi-Bellman equation, \ie, computing the value function of a stochastic control problem, and showed that the Lipschitz regularity of the previous Sinkhorn iterate propagates to the next one, with a constant dissipative rate. From this contraction estimates we have deduced the exponential convergence of the Sinkhorn iterates and of their gradients. All the dissipative Lipschitz estimates for the value function of the stochastic control problems considered have been deduced via an application of coupling by reflection techniques for controlled diffusion on the torus.

 This approach is a complete novelty and could in principle be extended to the non-compact Euclidean case, problem that we 
 address in the follow-up work \citep{conforti2023Sinkhorn}.

\acks{GG thanks École Polytechnique for its hospitality, where this research has been carried out, and NDNS+ for funding his visit there
(NDNS/2023.004 PhD Travel Grant). GG is supported from the NWO Research Project 613.009.111 "Analysis meets Stochastics:
Scaling limits in complex systems". 

\noindent GC acknowledges funding from the grant SPOT (ANR-20-CE40-0014).

\noindent AD acknowledges support from the Lagrange
Mathematics and Computing Research Center. AD would like
to thank the Isaac Newton Institute for Mathematical Sciences for support and hospitality during the programm
\emph{The mathematical and statistical foundation of future data-driven engineering} when work on this paper was undertaken. This work was supported by: EPSRC grant number
EP/R014604/1.}

\bibliography{bibliography}

\begin{thebibliography}{51}
\providecommand{\natexlab}[1]{#1}
\providecommand{\url}[1]{\texttt{#1}}
\expandafter\ifx\csname urlstyle\endcsname\relax
  \providecommand{\doi}[1]{doi: #1}\else
  \providecommand{\doi}{doi: \begingroup \urlstyle{rm}\Url}\fi

\bibitem[Adler et~al.(2017)Adler, Ringh, {\"O}ktem, and
  Karlsson]{adler2017learning}
Jonas Adler, Axel Ringh, Ozan {\"O}ktem, and Johan Karlsson.
\newblock Learning to solve inverse problems using wasserstein loss.
\newblock \emph{arXiv preprint arXiv:1710.10898}, 2017.

\bibitem[Altschuler et~al.(2017)Altschuler, Niles-Weed, and
  Rigollet]{altschuler2017}
Jason Altschuler, Jonathan Niles-Weed, and Philippe Rigollet.
\newblock Near-linear time approximation algorithms for optimal transport via
  sinkhorn iteration.
\newblock \emph{Advances in Neural Information Processing Systems}, 30, 2017.

\bibitem[Arjovsky et~al.(2017)Arjovsky, Chintala, and
  Bottou]{arjovsky2017wasserstein}
Martin Arjovsky, Soumith Chintala, and L{\'e}on Bottou.
\newblock Wasserstein generative adversarial networks.
\newblock In \emph{International conference on machine learning}, pages
  214--223. PMLR, 2017.

\bibitem[Benamou et~al.(2014)Benamou, Froese, and
  Oberman]{benamou2014numerical}
Jean-David Benamou, Brittany~D Froese, and Adam~M Oberman.
\newblock Numerical solution of the optimal transportation problem using the
  monge--amp{\`e}re equation.
\newblock \emph{Journal of Computational Physics}, 260:\penalty0 107--126,
  2014.

\bibitem[Borwein et~al.(1994)Borwein, Lewis, and Nussbaum]{borwein1994entropy}
Jonathan~M Borwein, Adrian~Stephen Lewis, and Roger Nussbaum.
\newblock Entropy minimization, {DAD} problems, and doubly stochastic kernels.
\newblock \emph{Journal of Functional Analysis}, 123\penalty0 (2):\penalty0
  264--307, 1994.

\bibitem[Brenier(1991)]{brenier1991polar}
Yann Brenier.
\newblock Polar factorization and monotone rearrangement of vector-valued
  functions.
\newblock \emph{Communications on pure and applied mathematics}, 44\penalty0
  (4):\penalty0 375--417, 1991.

\bibitem[Bump(2013)]{bump2013lie}
D.~Bump.
\newblock \emph{Lie Groups}.
\newblock Graduate Texts in Mathematics. Springer New York, 2013.
\newblock ISBN 9781461480242.
\newblock URL \url{https://books.google.fr/books?id=x2W4BAAAQBAJ}.

\bibitem[Carlier(2022)]{Carlier22multisink}
Guillaume Carlier.
\newblock On the {L}inear {C}onvergence of the {M}ultimarginal {S}inkhorn
  {A}lgorithm.
\newblock \emph{SIAM Journal on Optimization}, 32\penalty0 (2):\penalty0
  786--794, 2022.

\bibitem[Carlier and Laborde(2020)]{carlier2020differential}
Guillaume Carlier and Maxime Laborde.
\newblock A differential approach to the multi-marginal {S}chroedinger system.
\newblock \emph{SIAM Journal on Mathematical Analysis}, 52\penalty0
  (1):\penalty0 709--717, 2020.

\bibitem[Carlier et~al.(2017)Carlier, Duval, Peyr{\'e}, and
  Schmitzer]{carlier2017convergence}
Guillaume Carlier, Vincent Duval, Gabriel Peyr{\'e}, and Bernhard Schmitzer.
\newblock Convergence of entropic schemes for optimal transport and gradient
  flows.
\newblock \emph{SIAM Journal on Mathematical Analysis}, 49\penalty0
  (2):\penalty0 1385--1418, 2017.

\bibitem[Chen and Li(1989)]{chen1989coupling}
Mu-Fa Chen and Shao-Fu Li.
\newblock Coupling methods for multidimensional diffusion processes.
\newblock \emph{The Annals of Probability}, pages 151--177, 1989.

\bibitem[Chen et~al.(2016)Chen, Georgiou, and Pavon]{chen2016hilbertmetric}
Yongxin Chen, Tryphon Georgiou, and Michele Pavon.
\newblock Entropic and {D}isplacement {I}nterpolation: {A} {C}omputational
  {A}pproach {U}sing the {H}ilbert {M}etric.
\newblock \emph{SIAM Journal on Applied Mathematics}, 76\penalty0 (6):\penalty0
  2375--2396, 2016.

\bibitem[Chiarini et~al.(2023)Chiarini, Conforti, Greco, and
  Tamanini]{lagg2022gradient}
Alberto Chiarini, Giovanni Conforti, Giacomo Greco, and Luca Tamanini.
\newblock Gradient estimates for the {S}chr{\"o}dinger potentials: convergence
  to the {B}renier map and quantitative stability.
\newblock \emph{Communications in Partial Differential Equations}, pages 1--49,
  2023.
\newblock \doi{10.1080/03605302.2023.2215527}.
\newblock URL \url{https://doi.org/10.1080/03605302.2023.2215527}.

\bibitem[Conforti(2019)]{conforti2019second}
Giovanni Conforti.
\newblock A second order equation for {S}chr{\"o}dinger bridges with
  applications to the hot gas experiment and entropic transportation cost.
\newblock \emph{Probability Theory and Related Fields}, 174\penalty0
  (1-2):\penalty0 1--47, 2019.

\bibitem[Conforti(2022)]{conforti2022coupling}
Giovanni Conforti.
\newblock Coupling by reflection for controlled diffusion processes: turnpike
  property and large time behavior of {H}amilton {J}acobi {B}ellman equations.
\newblock \emph{arXiv preprint arXiv:2206.04009}, 2022.

\bibitem[Conforti et~al.(2023)Conforti, Durmus, and
  Greco]{conforti2023Sinkhorn}
Giovanni Conforti, Alain Durmus, and Giacomo Greco.
\newblock Quantitative contraction rates for {S}inkhorn algorithm: beyond
  bounded costs and compact marginals.
\newblock \emph{arxiv preprint arXiv:2304.04451}, 2023.

\bibitem[Cranston(1991)]{cranston91}
M.~Cranston.
\newblock Gradient estimates on manifolds using coupling.
\newblock \emph{Journal of Functional Analysis}, 99\penalty0 (1):\penalty0
  110--124, 1991.

\bibitem[Cuturi(2013)]{cuturi2013sinkhorn}
Marco Cuturi.
\newblock Sinkhorn distances: Lightspeed computation of optimal transport.
\newblock In \emph{Advances in {N}eural {I}nformation {P}rocessing {S}ystems},
  pages 2292--2300, 2013.

\bibitem[del Barrio et~al.(2022)del Barrio, Gonzalez-Sanz, Loubes, and
  Niles-Weed]{delBarrio2022}
Eustasio del Barrio, Alberto Gonzalez-Sanz, Jean-Michel Loubes, and Jonathan
  Niles-Weed.
\newblock An improved central limit theorem and fast convergence rates for
  entropic transportation costs.
\newblock \emph{arXiv preprint arXiv:2204.09105}, 2022.

\bibitem[Deligiannidis et~al.(2021)Deligiannidis, De~Bortoli, and
  Doucet]{deligiannidis2021quantitative}
George Deligiannidis, Valentin De~Bortoli, and Arnaud Doucet.
\newblock Quantitative uniform stability of the iterative proportional fitting
  procedure.
\newblock \emph{arXiv preprint arXiv:2108.08129}, 2021.

\bibitem[Di~Marino and Gerolin(2020)]{marinogerolin2020}
Simone Di~Marino and Augusto Gerolin.
\newblock An {O}ptimal {T}ransport {A}pproach for the {S}chrödinger {B}ridge
  {P}roblem and {C}onvergence of {S}inkhorn {A}lgorithm.
\newblock \emph{Journal of Scientific Computing}, 85\penalty0 (2):\penalty0 27,
  2020.

\bibitem[Dominitz and Tannenbaum(2009)]{dominitz2009texture}
Ayelet Dominitz and Allen Tannenbaum.
\newblock Texture mapping via optimal mass transport.
\newblock \emph{IEEE transactions on visualization and computer graphics},
  16\penalty0 (3):\penalty0 419--433, 2009.

\bibitem[Eberle(2016)]{eberle2016reflection}
Andreas Eberle.
\newblock Reflection couplings and contraction rates for diffusions.
\newblock \emph{Probability {T}heory and {R}elated {F}ields}, 166\penalty0
  (3-4):\penalty0 851--886, 2016.

\bibitem[Eckstein and Nutz(2022)]{eckstein2021quantitative}
Stephan Eckstein and Marcel Nutz.
\newblock Quantitative {S}tability of {R}egularized {O}ptimal {T}ransport and
  {C}onvergence of {S}inkhorn’s {A}lgorithm.
\newblock \emph{SIAM Journal on Mathematical Analysis}, 54\penalty0
  (6):\penalty0 5922--5948, 2022.

\bibitem[Folland(2013)]{folland2013real}
G.B. Folland.
\newblock \emph{Real Analysis: Modern Techniques and Their Applications}.
\newblock Pure and Applied Mathematics: A Wiley Series of Texts, Monographs and
  Tracts. Wiley, 2013.
\newblock ISBN 9781118626399.
\newblock URL \url{https://books.google.fr/books?id=wI4fAwAAQBAJ}.

\bibitem[F{\"o}llmer and Gantert(1997)]{follmer1997gantert}
Hans F{\"o}llmer and Nina Gantert.
\newblock Entropy minimization and {S}chr{\"o}dinger processes in infinite
  dimensions.
\newblock \emph{The Annals of Probability}, 25\penalty0 (2):\penalty0 901--926,
  1997.

\bibitem[Forrow et~al.(2019)Forrow, H{\"u}tter, Nitzan, Rigollet, Schiebinger,
  and Weed]{forrow2019statistical}
Aden Forrow, Jan-Christian H{\"u}tter, Mor Nitzan, Philippe Rigollet, Geoffrey
  Schiebinger, and Jonathan Weed.
\newblock Statistical optimal transport via factored couplings.
\newblock In \emph{The 22nd International Conference on Artificial Intelligence
  and Statistics}, pages 2454--2465. PMLR, 2019.

\bibitem[Franklin and Lorenz(1989)]{Franklin89hilbert}
Joel Franklin and Jens Lorenz.
\newblock On the scaling of multidimensional matrices.
\newblock \emph{Linear Algebra and its Applications}, 114-115:\penalty0
  717--735, 1989.

\bibitem[Garc{\'\i}a-Portugu{\'e}s et~al.(2019)Garc{\'\i}a-Portugu{\'e}s,
  S{\o}rensen, Mardia, and Hamelryck]{garcia2019langevin}
Eduardo Garc{\'\i}a-Portugu{\'e}s, Michael S{\o}rensen, Kanti~V Mardia, and
  Thomas Hamelryck.
\newblock Langevin diffusions on the torus: estimation and applications.
\newblock \emph{Statistics and Computing}, 29:\penalty0 1--22, 2019.

\bibitem[Ghosal and Nutz(2022)]{ghosal2022nutz}
Promit Ghosal and Marcel Nutz.
\newblock On the {C}onvergence {R}ate of {S}inkhorn’s {A}lgorithm.
\newblock \emph{arxiv preprint arXiv:2212.06000}, 2022.

\bibitem[Gigli(2011)]{gigli2011holder}
Nicola Gigli.
\newblock On {H}{\"o}lder continuity-in-time of the optimal transport map
  towards measures along a curve.
\newblock \emph{Proceedings of the Edinburgh Mathematical Society}, 54\penalty0
  (2):\penalty0 401–409, 2011.
\newblock \doi{10.1017/S001309150800117X}.

\bibitem[Kellerer(1984)]{kellerer1984duality}
Hans~G Kellerer.
\newblock Duality theorems for marginal problems.
\newblock \emph{Zeitschrift f{\"u}r Wahrscheinlichkeitstheorie und verwandte
  Gebiete}, 67:\penalty0 399--432, 1984.

\bibitem[Kent(1978)]{kent1978time}
John Kent.
\newblock Time-reversible diffusions.
\newblock \emph{Advances in Applied Probability}, 10\penalty0 (4):\penalty0
  819--835, 1978.

\bibitem[Kolouri et~al.(2017)Kolouri, Park, Thorpe, Slepcev, and
  Rohde]{kolouri2017optimal}
Soheil Kolouri, Se~Rim Park, Matthew Thorpe, Dejan Slepcev, and Gustavo~K
  Rohde.
\newblock Optimal mass transport: Signal processing and machine-learning
  applications.
\newblock \emph{IEEE signal processing magazine}, 34\penalty0 (4):\penalty0
  43--59, 2017.

\bibitem[L{\'e}onard(2012)]{leonard2012schrodinger}
{C.} L{\'e}onard.
\newblock From the {S}chr{\"o}dinger problem to the {M}onge--{K}antorovich
  problem.
\newblock \emph{Journal of Functional Analysis}, 262\penalty0 (4):\penalty0
  1879--1920, 2012.

\bibitem[L{\'e}onard(2014)]{LeoSch}
Christian L{\'e}onard.
\newblock A survey of the {S}chrödinger problem and some of its connections
  with optimal transport.
\newblock \emph{Discrete and Continuous Dynamical Systems}, 34\penalty0
  (4):\penalty0 1533--1574, 2014.
\newblock ISSN 1078-0947.
\newblock \doi{10.3934/dcds.2014.34.1533}.
\newblock URL
  \url{http://aimsciences.org/journals/displayArticlesnew.jsp?paperID=9020}.

\bibitem[Niles-Weed and Rigollet(2022)]{niles2022estimation}
Jonathan Niles-Weed and Philippe Rigollet.
\newblock Estimation of wasserstein distances in the spiked transport model.
\newblock \emph{Bernoulli}, 28\penalty0 (4):\penalty0 2663--2688, 2022.

\bibitem[Nutz(2021)]{nutz2021introduction}
Marcel Nutz.
\newblock Introduction to entropic optimal transport.
\newblock \emph{Lecture notes, Columbia University}, 2021.
\newblock URL
  \url{https://www.math.columbia.edu/~mnutz/docs/EOT_lecture_notes.pdf}.

\bibitem[Nutz and Wiesel(2022)]{NutzWiesel:qualitative:Sinkhorn}
Marcel Nutz and Johannes Wiesel.
\newblock Stability of {S}chr{\"o}dinger {P}otentials and {C}onvergence of
  {S}inkhorn's {A}lgorithm.
\newblock \emph{arxiv preprint arxiv:2201.10059}, 2022.

\bibitem[Papadakis et~al.(2014)Papadakis, Peyr{\'e}, and
  Oudet]{papadakis2014optimal}
Nicolas Papadakis, Gabriel Peyr{\'e}, and Edouard Oudet.
\newblock Optimal transport with proximal splitting.
\newblock \emph{SIAM Journal on Imaging Sciences}, 7\penalty0 (1):\penalty0
  212--238, 2014.

\bibitem[Petersen(2006)]{petersen2006Riemannian}
Peter Petersen.
\newblock \emph{Riemannian {G}eometry}.
\newblock Number~2 in Graduate Texts in Mathematics. Springer New York, NY,
  2006.

\bibitem[Peyr{\'e} and Cuturi(2019)]{peyre2019computational}
Gabriel Peyr{\'e} and Marco Cuturi.
\newblock Computational {O}ptimal {T}ransport.
\newblock \emph{Foundations and Trends in Machine Learning}, 11\penalty0
  (5-6):\penalty0 355--607, 2019.

\bibitem[Schr{\"o}dinger(1931)]{Schr}
Erwin Schr{\"o}dinger.
\newblock {\"U}ber die {U}mkehrung der {N}aturgesetze.
\newblock \emph{Sitzungsberichte Preuss. Akad. Wiss. Berlin. Phys. Math.},
  144:\penalty0 144--153, 1931.
\newblock \doi{http://dx.doi.org/10.1063/1.527002}.
\newblock URL
  \url{http://scitation.aip.org/content/aip/journal/jmp/27/9/10.1063/1.527002}.

\bibitem[Sinkhorn(1964)]{Sinkhorn64}
Richard Sinkhorn.
\newblock A {R}elationship {B}etween {A}rbitrary {P}ositive {M}atrices and
  {D}oubly {S}tochastic {M}atrices.
\newblock \emph{The Annals of Mathematical Statistics}, 35\penalty0
  (2):\penalty0 876--879, 1964.

\bibitem[Sinkhorn and Knopp(1967)]{SinkhornKnopp67}
Richard Sinkhorn and Paul Knopp.
\newblock Concerning nonnegative matrices and doubly stochastic matrices.
\newblock \emph{Pacific Journal of Mathematics}, 21\penalty0 (2):\penalty0
  343--348, 1967.

\bibitem[Solomon et~al.(2015)Solomon, De~Goes, Peyr{\'e}, Cuturi, Butscher,
  Nguyen, Du, and Guibas]{solomon2015convolutional}
Justin Solomon, Fernando De~Goes, Gabriel Peyr{\'e}, Marco Cuturi, Adrian
  Butscher, Andy Nguyen, Tao Du, and Leonidas Guibas.
\newblock Convolutional wasserstein distances: Efficient optimal transportation
  on geometric domains.
\newblock \emph{ACM Transactions on Graphics (ToG)}, 34\penalty0 (4):\penalty0
  1--11, 2015.

\bibitem[Villani(2008)]{villani2008optimal}
C{\'e}dric Villani.
\newblock \emph{Optimal transport: old and new}, volume 338.
\newblock Springer Science \& Business Media, 2008.

\bibitem[Wang(1994)]{wang1994neumann}
Feng-Yu Wang.
\newblock Application of coupling methods to the {N}eumann eigenvalue problem.
\newblock \emph{Probability Theory and Related Fields}, 98\penalty0
  (3):\penalty0 299--306, 1994.

\bibitem[Yule(1912)]{Yule12}
G.~Udny Yule.
\newblock On the {M}ethods of {M}easuring {A}ssociation {B}etween {T}wo
  {A}ttributes.
\newblock \emph{Journal of the Royal Statistical Society}, 75\penalty0
  (6):\penalty0 579--652, 1912.
\newblock ISSN 09528385.
\newblock URL \url{http://www.jstor.org/stable/2340126}.

\bibitem[Zhu(2010)]{zhu2010}
Xuehong Zhu.
\newblock The optimal control related to {R}iemannian manifolds and the
  viscosity solutions to {H}-{J}-{B} equations.
\newblock \emph{arxiv preprint arXiv:1001.2820}, 2010.

\bibitem[Zhu(2011)]{zhu2011}
Xuehong Zhu.
\newblock Viscosity {S}olutions to {S}econd {O}rder {P}arabolic {P}{D}{E}s
  {R}iemannian {M}anifolds.
\newblock \emph{Acta Applicandae Mathematicae}, 115\penalty0 (3):\penalty0
  279--290, 2011.

\end{thebibliography}

\newpage
\appendix

\section*{Organisation of the supplementary}

In Appendix~\ref{app:gio}, we give an explicit construction for coupling by reflection on the torus, which we adapt for controlled drifts. In Appendix~\ref{app:proof_lemm}, we provide the proof of Lemma~\ref{lemma:diff:lip}, which is crucial to derive our main result. In Appendix~\ref{app:explicit:computations}, we first compute explicit estimates for the convergence rate $\gamma$ and the constant $\cconst$ appearing in Theorem \ref{thm:sink}, under a weak-semiconvexity assumption, and then give a class of examples of potentials satisfying this condition.

\section{Reflection coupling for HJB estimates on the torus}\label{app:gio}
In this section we adapt the ideas developed in \cite{conforti2022coupling} for controlled diffusions on $\bbRD$ to our compact setting on $\mathbb{T}_L^d$ as \eqref{eq:2}. Before doing so, let us spend a few sentences on why we cannot rely on the existing coupling by reflection literature. First of all, the SDE that we consider is time-inhomogenoeus which is not usually studied, especially when considering diffusions on a Riemannian manifold such as the torus. Moreover being on the torus brings another issue. As long as $Y_s$ does not belong to the cut-locus of $X_s$, we may straightforwardly define coupling by reflection as in the flat Euclidean case.
However as soon as $Y_t$ belongs to the cut-locus of $X_t$ a more careful analysis is required since we could not apply Ito formula to $\sfd^2$ anymore. Indeed on any Riemannian manifold  $\msm$, $\sfd^2$ fails to be $\mathcal{C}^2$ on the set of conjugates points, \ie, couples $(x,y)\in \msm\times \msm$ such that $y$ is in the cut-locus of $x$ \cite[Chapter 5.9.1]{petersen2006Riemannian}. In order to deal with this issue one could try to restrict the problem to regular domains (submanifolds without conjugate points and with convex boundary) as it is done in \cite{wang1994neumann}, or one could try to combined reflection coupling with different techniques. The latter approach is portrayed in \cite{cranston91}, where the author alternates coupling by reflection with an independent coupling (in a small-time window, enough to avoid that the coupled diffusion reaches points in the cut-locus). However we could neither rely on this approach because, despite of having chosen independent Brownian motions, in our control setting the process $(Y_t)_{t \geq 0}$ would still depend on $(X_t)_{t \geq 0}$ via the control process (see \eqref{sde:system} where we are going to define the reflection coupling by plugging the optimal control for $(X_t)_{t \geq 0}$ in the controlled SDE for $(Y_t)_{t \geq 0}$).

\bigskip

Having in mind our target, we begin by adapting the coupling by reflection technique of \cite{wang1994neumann,eberle2016reflection} to the torus. In this appendix, we are going to consider a time-inhomogeneous SDE on the torus, namely
\begin{equation}
  \label{eq:inhom_diffusion}
\De X_s=b_s(X_s)\,\De s+\De B_s\quad \text{ on }\mathbb{T}_L^d\,
\end{equation}

with a time-inhomogeneous drift function $b_s\in \mathcal{C}^{0,1}([0,T)\times\mathbb{T}^d_L)$.
Under the condition $b_s\in \mathcal{C}^{0,1}([0,T)\times\mathbb{T}^d_L)$, this SDE admits a unique strong solution and corresponds to a inhomogeneous Markov semigroup that we denote by $(\rmP_{s,t}^b)_{s,t \in [0,T]}$ in the sequel. 
We also consider an arbitrary modulus of weak-semiconvexity $\kappa_b$ associated to $b$, whose definition is provided below. 

\begin{definition}\label{app:new:def:kappa} A function $\kappa_b: (0,L\,d^{1/2}] \to \mathbb{R}$ is said to be a modulus of weak semi-convexity associated to the drift $b$ if (i) $\kappa_b$ is continuous on $(0, L\,d^{1/2}]$, (ii) $s \mapsto s \kappa_b(s)$ is integrable on $(0, L \, d^{1/2}]$ and (iii) we have
\bes
    \kappa_b(r)\leq\inf_{s\in[0,T]}\inf\biggl\{-\frac{2L}{\pi}\frac{\sin(\frac{\pi}{L}(x-y))^{{\sf  T}}(b_s(x)-b_s(y))}{\delta^2(x,y)}\,:\,x\neq y\in\mathbb{T}_L^d\text{ s.t. }\delta(x,y)=r \biggr\}\,.
\ees
Let us remark that $\kappa_b$ is always non-positive \footnote{This property is inherited from the fact that we work on a compact Riemannian manifold.}.
\end{definition}
 By mimicking \citep{wang1994neumann,eberle2016reflection}, let us define for any $r\in(0,L \, d^{1/2}]$ the functions 
\begin{equation}
\begin{split}\label{def:f}
f_b(r)\coloneqq&\, \int_0^r\phi(s)\,g(s)\,\De s\quad\text{where}\quad
\phi(r)\coloneqq \exp\left(\frac14\int_0^r s\,\kappa_b(s)\,\De s\right)\,,\\\Phi(r)\coloneqq &\,\int_0^r\phi(s)\,\De s\quad\text{and}\quad
g(r)\coloneqq\, 1-\frac{\int_0^{r}\Phi(s)/\phi(s)\,\De s}{2\int_0^{L\,d^{1/2}}\Phi(s)/\phi(s)\,\De s}\,.
\end{split}
\end{equation}
Let us remark here that the main difference with \citep{eberle2016reflection} is the presence of $L\,d^{1/2}$, \ie the diameter of the torus, as an upper-limit in the integration domain in the definition of $g$. 
Finally, consider the multiplicative and rate constants
\begin{equation}\label{def:lambda:C}
\begin{split}
&\,C_b\coloneqq \frac{\phi( L \, d^{1/2})}{2}=\frac{1}{2}\,\exp\left(\frac14\int_0^{L\,d^{1/2}} s\,\kappa_b(s)\,\De s\right)\\
&\,\text{ and }\quad\lambda_b\coloneqq \left(\int_0^{L \, d^{1/2}}\Phi(s)/\phi(s)\,\De s\right)^{-1}.
\end{split}
\end{equation}
The key properties of the triplet $(C_b,\lambda_b,f_b)$ are summarized in the following result.
\begin{proposition}\label{app:costruzione:f}
The triplet $(C_b,\lambda_b,f_b)$ satisfies
\begin{enumerate}
\item $f_b$ is equivalent to the identity, \ie, for any $ r\in[0,L \, d^{1/2}]$,
\bes C_b\,r\leq f_b(r)\leq r\quad\text{and}\quad C_b\leq f_b^{'}(r)\leq 1\quad \, \ees
\item for any $r\in[0,L\,d^{1/2}]$, it holds
\begin{equation}\label{eq:diff:f}
f^{''}_b(r)-\frac{\kappa_b(r)}{4}\,r\,f^{'}_b(r)\leq -\frac{\lambda_b}{2}f_b(r)\,.
\end{equation}
\end{enumerate}
\end{proposition}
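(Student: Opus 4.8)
\textbf{Proof plan for Proposition~\ref{app:costruzione:f}.}

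The plan is to verify the two claims by direct computation, exploiting the explicit formulas in \eqref{def:f} and \eqref{def:lambda:C} and the fact that $\kappa_b$ is non-positive. First I would record the elementary consequences of $\kappa_b \leq 0$: since $\phi(r) = \exp(\frac14\int_0^r s\,\kappa_b(s)\,\De s)$ and the integrand $s\,\kappa_b(s)$ is $\leq 0$, the function $\phi$ is non-increasing with $\phi(0)=1$, hence $0 < \phi(L\,d^{1/2}) \leq \phi(r) \leq 1$ for all $r\in[0,L\,d^{1/2}]$. This is the main quantitative input; everything else is bookkeeping.

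\emph{Part 1 (equivalence to the identity).} For the derivative bound, I would compute $f_b'(r) = \phi(r)\,g(r)$ from the definition of $f_b$ as an integral. I then need $C_b \leq \phi(r)\,g(r) \leq 1$. For the upper bound, observe $g(r) \leq 1$ (the subtracted term is non-negative since $\Phi/\phi \geq 0$) and $\phi(r)\leq 1$, so $f_b'(r)\leq 1$. For the lower bound, the subtracted term in $g$ is at most $1/2$ because its numerator $\int_0^r \Phi(s)/\phi(s)\,\De s$ is at most $\int_0^{L d^{1/2}} \Phi(s)/\phi(s)\,\De s$, so $g(r) \geq 1/2$; combined with $\phi(r) \geq \phi(L\,d^{1/2})$ this gives $f_b'(r) \geq \tfrac12\phi(L\,d^{1/2}) = C_b$. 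The bounds $C_b\,r \leq f_b(r) \leq r$ then follow by integrating $f_b'$ from $0$ to $r$ and using $f_b(0)=0$.

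\emph{Part 2 (the differential inequality \eqref{eq:diff:f}).} Here I would differentiate $f_b'(r) = \phi(r)g(r)$ to get $f_b''(r) = \phi'(r)g(r) + \phi(r)g'(r)$, and use $\phi'(r) = \tfrac14 r\,\kappa_b(r)\,\phi(r)$ together with $g'(r) = -\Phi(r)/\big(2\phi(r)\int_0^{L d^{1/2}}\Phi/\phi\big) = -\tfrac{\lambda_b}{2}\,\Phi(r)/\phi(r)$. Substituting, the term $\tfrac14 r\,\kappa_b(r)\phi(r)g(r) = \tfrac{\kappa_b(r)}{4}r\,f_b'(r)$ cancels exactly against the same term on the left-hand side of \eqref{eq:diff:f}, leaving $f_b''(r) - \tfrac{\kappa_b(r)}{4}r f_b'(r) = \phi(r)g'(r) = -\tfrac{\lambda_b}{2}\Phi(r)$. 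It then remains to check $\Phi(r) \geq f_b(r)$, i.e. that $\int_0^r \phi \geq \int_0^r \phi\,g$, which is immediate from $g \leq 1$; this yields $-\tfrac{\lambda_b}{2}\Phi(r) \leq -\tfrac{\lambda_b}{2}f_b(r)$ and closes the argument. I do not expect a genuine obstacle here: the construction of $f_b$, $\Phi$, $\phi$, $g$, $C_b$, $\lambda_b$ is precisely engineered (following \cite{eberle2016reflection}) so that these cancellations happen, and the only subtlety is confirming that the integrability hypothesis (ii) on $s\mapsto s\kappa_b(s)$ in Definition~\ref{app:new:def:kappa} makes $\phi$ well-defined and bounded away from zero on the compact interval, which is where the mildest care is needed.
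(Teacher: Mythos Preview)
Your proposal is correct and follows essentially the same approach as the paper's proof: both exploit the non-positivity of $\kappa_b$ to bound $\phi$ and $g$, compute $f_b'=\phi g$ and $f_b''=\phi'g+\phi g'$ explicitly, and use $\Phi\geq f_b$ to close the differential inequality. Your account is in fact slightly sharper, since you correctly record $f_b''(r)-\tfrac{\kappa_b(r)}{4}r f_b'(r)=\phi(r)g'(r)$ as an equality (the paper writes ``$\leq$'' there, though equality holds).
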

\begin{proof}
Let us start by simply noticing that the non-positivity of $\kappa_b$ implies
\bes
\phi(L\eqsp d^{1/2})\leq \phi(s)\leq 1\,,\quad\phi(L\eqsp d^{1/2})\,r\leq \Phi(r)\leq r\quad\text{ and }\quad \frac12\leq g(r)\leq 1\,,
\ees
which immediately proves the bounds for $f_b^{'}(r)=\phi(r)g(r)$ with $C_b=\phi(L\,d^{1/2})/2$. From the previous bound on $g$ we immediately deduce also
\begin{equation}\label{eq:bound:Phi:f}
    \Phi(r)/2\leq f_b(r)\leq \Phi(r)\,,
\end{equation}
which combined with the above bound for $\Phi$ concludes the proof of the first item.

\medskip

In order to prove the second item, it is enough to compute
\bes 
\begin{split}
f_b^{''}(r)=\phi^{'}(r)g(r)+\phi(r)g^{'}(r)=\frac{\kappa_b(r)}{4}\,r\,\phi(r)g(r)+\phi(r)g^{'}(r)
\end{split}
\ees
which combined with  $f_b^{'}(r)=\phi(r) g(r)$, reads as
\bes
f^{''}_b(r)-\frac{\kappa_b(r)}{4}\,r\,f^{'}_b(r)\leq \phi(r)g^{'}(r)\,.
\ees
Since, for any $r\in[0,L\,d^{1/2}]$, it holds $g^{'}(r)= -\frac{\lambda_b}{2}\,\Phi(r)/\phi(r)$, we deduce  
\bes
f^{''}_b(r)-\frac{\kappa_b(r)}{4}\,r\,f^{'}_b(r)\leq -\frac{\lambda_b}{2}\,\Phi(r)\overset{\eqref{eq:bound:Phi:f}}{\leq} -\frac{\lambda_b}{2}\,f_b(r)\,.
\ees
\end{proof}

We are now in place to apply the coupling by reflection ideas to the HJB equation.
Recall that we consider a filtered probability space
$(\Omega, (\mathcal{F}_s)_{s\in[0,T]},\mathcal{F}, \mathbb{P})$ satisfying the usual conditions and endowed with a Brownian motion on $\torusL^d$.

Recall that for any $h : \torusL^d \to \rset$, Lipschitz, its backward evolution along the semigroup associated with \eqref{eq:inhom_diffusion} is defined as
\[\mathcal{V}^{T,h}_t\coloneqq-\log \rmP_{t,T}^b \rme^{-h}\quad\forall t\in[0,T]\,.\]
Let us recall that if the SDE is time-homogeneous (e.g., $b_s(x)=V(x)$ for all $s\in [0,T]$) then the previous expression is equivalent to the usual expression $-\log \rmP_{T-t} \rme^{-h}$.
As already mentioned in Section \ref{sec:intro}, this quantity can be seen as the value function of a stochastic optimal control problem, and solves an Hamilton-Jacobi-Bellman equation. We prove this statement in the following result.
\begin{proposition}\label{app:prop:contr} We recall that $b_s\in\mathcal{C}^{0,1}([0,T)\times\mathbb{T}_L^d)$. Let $h\in \mathcal{C}^3(\mathbb{T}_L^d)$, then 
\begin{enumerate}
\item \label{item:SOC_1}$\mathcal{V}^{T,h}_t$ is the unique strong solution in $\mathcal{C}^{1,2}([0,T)\times\mathbb{T}_L^d)$ of the HJB equation
\begin{equation}\label{app:HJB}
\begin{cases}
\partial_t u_t+\frac12\Delta u_t+b_t(X_t)\cdot\nabla u_t-\frac12 |\nabla u_t|^2=0\\
u_T=h\,.
\end{cases}
\end{equation}
\item \label{item:SOC_2}$\mathcal{V}^{T,h}_t$ is the the value function of the stochastic control problem
\be\label{app:control:problem}
\begin{split}
\mathcal{J}^{T,h}_t(x)=&\, \inf_{q_\cdot\in\mathcal{A}_{[t,T]}} \bbE\biggl[\frac{1}{2}\int_t^T|q_s|^2\De s +h(X^q_T)\biggr]\\
&\text{where }\mathbb{P}\text{-a.s. it holds }\,
\begin{cases}\De X^q_s=(b_s(X^q_s)+q_s)\De s + \De B_s\\
X^q_t=x\end{cases}
\end{split}
\ee
and $\mathcal{A}_{[t,T]}$ denotes the set of admissible controls, \ie, progressively measurable processes with finite moments on $(\Omega, (\mathcal{F}_s)_{s\in[0,T]},\mathcal{F}, \mathbb{P})$. Moreover, the optimal control is a feedback-process equal to $-\nabla\mathcal{V}_s^{T,h}(X^q_s)$.
\item \label{item:SOC_3}  
Uniformly on $t\in[0,T]$, it holds
\bes
\| \mathcal{V}^{T,h}_t\|_{f_b}\leq \rme^{-\lambda_b \,\pi^2\,(T-t)}\|h\|_{f_b}
\ees
where $\lambda_{b}, f_{b}$ are the quantities defined at \eqref{def:f} and \eqref{def:lambda:C}.
\end{enumerate}
Moreover, Item~\ref{item:SOC_3} can be relaxed to assuming $h$ only Lipschitz.
\end{proposition}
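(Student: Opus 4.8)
The plan is to establish Proposition~\ref{app:prop:contr} in three stages, following the classical chain HJB equation $\to$ verification theorem $\to$ coupling-based Lipschitz estimate.

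\textbf{Step 1: the HJB equation (Item~\ref{item:SOC_1}).} First I would observe that the Hopf--Cole transform linearizes everything: setting $w_t \coloneqq \rmP_{t,T}^b \rme^{-h}$, we have $w_t = \rme^{-\mathcal{V}_t^{T,h}}$, and since $(\rmP_{s,t}^b)$ is the Markov semigroup of \eqref{eq:inhom_diffusion}, $w_t$ solves the backward Kolmogorov equation $\partial_t w_t + \tfrac12 \Delta w_t + b_t \cdot \nabla w_t = 0$ with terminal condition $w_T = \rme^{-h}$. Because $b \in \mathcal{C}^{0,1}$, $h \in \mathcal{C}^3(\torusL^d)$ and $\torusL^d$ is compact, parabolic Schauder theory gives $w_t \in \mathcal{C}^{1,2}([0,T)\times\torusL^d)$ with $w_t$ strictly positive (it is an integral against a positive kernel of a strictly positive terminal datum). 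Then $u_t \coloneqq -\log w_t$ is well-defined and $\mathcal{C}^{1,2}$; a direct computation, writing $\partial_t u = -\partial_t w / w$, $\nabla u = -\nabla w/w$ and $\Delta u = -\Delta w/w + |\nabla w|^2/w^2 = -\Delta w / w + |\nabla u|^2$, substitutes into the linear PDE to yield exactly \eqref{app:HJB}. Uniqueness in $\mathcal{C}^{1,2}$ follows by reversing the transform: any classical solution $u$ produces $w = \rme^{-u}$ solving the linear backward equation, whose solution is unique by the maximum principle on the compact manifold.

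\textbf{Step 2: the stochastic control representation (Item~\ref{item:SOC_2}).} Here I would run the standard verification argument. For any admissible control $q \in \mathcal{A}_{[t,T]}$ with associated state $X^q$, apply It\^o's formula to $s \mapsto \mathcal{V}_s^{T,h}(X^q_s)$ on $[t,T]$; using the HJB equation \eqref{app:HJB} to cancel the $\partial_t + \tfrac12\Delta + b_s\cdot\nabla$ terms, the drift contribution becomes $\tfrac12|\nabla\mathcal{V}_s^{T,h}|^2 + q_s\cdot\nabla\mathcal{V}_s^{T,h} \geq -\tfrac12|q_s|^2$ by Young's inequality, with equality iff $q_s = -\nabla\mathcal{V}_s^{T,h}(X^q_s)$. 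Taking expectations (the stochastic integral is a true martingale since $\nabla\mathcal{V}^{T,h}$ is bounded on the compact torus and $q$ has finite moments), rearranging and using $\mathcal{V}_T^{T,h} = h$ gives $\mathcal{V}_t^{T,h}(x) \leq \bbE[\tfrac12\int_t^T|q_s|^2\De s + h(X^q_T)]$ for all $q$, hence $\mathcal{V}_t^{T,h} \leq \mathcal{J}_t^{T,h}$. For the reverse, one must check that the feedback control $q_s^\star = -\nabla\mathcal{V}_s^{T,h}(X_s)$ yields a well-posed SDE --- this is guaranteed because $\nabla\mathcal{V}^{T,h}$ is $\mathcal{C}^1$ in space on $[0,T)$, hence locally Lipschitz, and the torus is compact so solutions do not explode; a slight care near $s=T$ is handled by the $\mathcal{C}^3$ regularity of $h$ propagating to $\mathcal{C}^{1,2}$ up to $s=T$ in the linearized picture. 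Plugging $q^\star$ in makes the inequality above an equality, giving $\mathcal{V}_t^{T,h} = \mathcal{J}_t^{T,h}$ and identifying the optimal feedback.

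\textbf{Step 3: the contractive Lipschitz estimate (Item~\ref{item:SOC_3}).} This is the heart of the matter and the main obstacle. I would fix $x \neq y \in \torusL^d$ and, following the sketch in Section~\ref{sec:proof}, use the control representation to write $\mathcal{V}_t^{T,h}(y) - \mathcal{V}_t^{T,h}(x) \leq \bbE[h(Y_T) - h(X_T)]$, where $X$ is the optimally-controlled process from $x$ and $Y$ is driven from $y$ by the \emph{same} feedback drift $-\nabla V - \nabla\mathcal{V}_s^{T,h}(X_s)$ evaluated along $X$, but with an independent (or reflected) Brownian motion chosen so that the pair is a coupling by reflection adapted to the sine-distance $\delta$. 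The key is to apply It\^o's formula to $s \mapsto f_b(\delta(X_s,Y_s))$: away from the cut-locus this is legitimate since $\delta$ is smooth there; the construction in this appendix (and the choice of $\delta$ rather than $\sfd$) is precisely designed so that $\delta^2$ --- hence $f_b \circ \delta$ via the chain rule --- remains amenable near the cut-locus, circumventing the $\mathcal{C}^2$ failure of $\sfd^2$. The reflection coupling makes the martingale part have quadratic variation $4 |f_b'(\delta_s)|^2 \,\De s$ (the factor reflecting the doubled noise in the radial direction), while the drift part is controlled using Definition~\ref{app:new:def:kappa}: the difference of drifts contributes at most $-\tfrac{\kappa_b(\delta_s)}{4}\delta_s f_b'(\delta_s)\,\De s$ plus the $f_b''(\delta_s)$ term from the diffusion, and Proposition~\ref{app:costruzione:f}(ii) bounds this combination by $-\tfrac{\lambda_b}{2}\cdot 2 f_b(\delta_s)\,\De s$ after accounting for the $\pi^2/L^2$-type constant coming from the sine-distance geometry (this is where the $\pi^2$ in the exponent originates). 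Taking expectations gives $\tfrac{\De}{\De s}\bbE[f_b(\delta_s)] \leq -\lambda_b \pi^2 \bbE[f_b(\delta_s)]$, so Gr\"onwall yields $\bbE[f_b(\delta_T)] \leq \rme^{-\lambda_b\pi^2(T-t)} f_b(\delta(x,y))$. Combined with $|h(Y_T) - h(X_T)| \leq \|h\|_{f_b} f_b(\delta(X_T,Y_T))$ (using $f_b \leq$ identity and subadditivity/concavity of $f_b$) and symmetry in $x,y$, this produces $|\mathcal{V}_t^{T,h}(x) - \mathcal{V}_t^{T,h}(y)| \leq \rme^{-\lambda_b\pi^2(T-t)}\|h\|_{f_b} f_b(\delta(x,y))$, i.e.\ the claimed bound. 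The final remark --- relaxing $h$ to merely Lipschitz --- follows by approximating $h$ by $\mathcal{C}^3$ functions $h_k \to h$ uniformly with $\|h_k\|_{f_b} \to \|h\|_{f_b}$ (mollification on the torus), noting $\mathcal{V}^{T,h_k} \to \mathcal{V}^{T,h}$ pointwise by continuity of $\rmP_{t,T}^b$ and $\log$, and passing to the limit in the estimate. The delicate points I expect to fight with are: making the It\^o formula for $f_b(\delta(X_s,Y_s))$ rigorous across the cut-locus (likely via a localization/semimartingale-decomposition argument or an occupation-time bound showing the cut-locus is visited with zero Lebesgue-measure of times), and tracking the geometric constants relating $\delta$, $\sfd$ and the Laplacian so that the $\pi^2$ factor comes out cleanly.
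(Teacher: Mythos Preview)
Your overall strategy matches the paper's closely, but two points deserve comment.

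For Item~\ref{item:SOC_2} you take a genuinely different (and more direct) route: you run the standard verification argument, applying It\^o to $\mathcal{V}_s^{T,h}(X^q_s)$ and using Young's inequality. This is correct and clean on the compact torus. The paper instead proceeds by truncating the Hamiltonian, invoking viscosity-solution uniqueness results on compact manifolds for each truncated problem, establishing a uniform-in-time gradient bound $\sup_{t,x}|\nabla\mathcal{V}^{T,h}_t(x)|\leq K$ via a \emph{synchronous} coupling, and only then letting the truncation parameter tend to infinity. Your argument is more elementary and self-contained; the paper's detour is presumably there to interface with existing verification theorems that assume bounded control sets.

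For Item~\ref{item:SOC_3} your outline is right in spirit but underestimates the It\^o computation. The quadratic variation of the martingale part is \emph{not} simply $4|f_b'(\delta_s)|^2\,\De s$ as in the flat Euclidean reflection coupling: applying It\^o to the sine-distance $\delta$ produces several trigonometric cross-terms (involving $\|\sin(2\pi(X-Y)/L)\|^2$, $\sum_i\sin^4(\pi(X^i-Y^i)/L)$, and the inner product $\langle\sin(2\pi(X-Y)/L),\sin(\pi(X-Y)/L)\rangle$), and the paper devotes about a page to showing that a specific combination of three such terms is non-positive, eventually reducing it to the elementary inequality $a^2+b^2-ab-1\leq 0$ for $a,b\in[0,1]$. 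This calculation --- not the cut-locus --- is the technical heart of the proof, and it is where the factor $\pi^2$ emerges. Your closing worry about localization near the cut-locus is in fact moot: $\delta$ is globally smooth on $\torusL^d\times\torusL^d$ by design (this is precisely why it was introduced in place of $\sfd$), so the It\^o formula applies directly without any occupation-time argument.
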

\begin{proof}
Let $h\in\mathcal{C}^3(\mathbb{T}_L^d)$. Firstly, let us observe that, under the current assumption, a direct computation shows that $\mathcal{V}^{T,h}_t$ is a classical solution for the HJB associated the generator $L_t=\frac{\Delta}{2}+b_t\cdot\nabla$ and that
\eqref{app:HJB} can be equivalently written as
\bes
\begin{cases}
\partial_t u_t+\frac12\Delta u_t+H(x,\nabla u_t)=0\\
u_T=h\,;
\end{cases}\quad\text{where }H(x,p)\coloneqq -\inf_{u\in\bbRD}\left\{\frac{|u|^2}{2}+b_s(x)\cdot p+\eqsp u\cdot p\right\} \eqsp,
\ees
where the above infimum is actually attained at $\omega(x,p)\coloneqq -p$. The uniqueness of the classical solution stated in Item~\ref{item:SOC_1} can be deduced via the uniqueness for the HJB equation in the Euclidean space \cite[Proposition 3.1]{conforti2022coupling}, by meaning of a periodic extension argument (\ie, considering the same HJB where the coefficients are defined on $\bbRD$ via periodicity).

\medskip

Item~\ref{item:SOC_2} can be proven
 via a standard approximation procedure,  that we sketch here for readers' convenience. 
Let us start by introducing for any $M\in\mathbb{N}$ the Hamiltonian $H^M\colon \mathbb{T}^d_L\times \bbRD\to \mathbb{R}$
\bes
H^M(x,p)\coloneqq -\min_{|u|\leq M}\left\{\frac{|u|^2}{2}+b_s(x)\cdot p+\chi_M(|u|)\eqsp u\cdot p\right\}\eqsp,
\ees
where $\chi^M$ is a smooth function satisfying
\bes
\chi^M(r)=\begin{cases}
    1\quad\text{if }r\leq M\\
    0\quad\text{if }r\geq M+1
\end{cases}\quad\text{ such that }\quad\sup_{r\geq 0}\left|\chi_M^{'}\right|<+\infty\eqsp.
\ees
Denote with $\omega^M(x,p)$ the optimal $u$ in the definition of $H^M$, \ie, the solution of
\bes
u+\chi_M(|u|)p+|u|\eqsp\chi_M^{'}(|u|)\eqsp p=0\eqsp,
\ees
and notice that for any $|p|< M$, it holds $\omega^M(x,p)=-p$.

Then, for any fixed $M\in\mathbb{N}$, \cite[Theorem 3.1]{zhu2011} guarantees the uniqueness of  a viscosity solution for the associated HJB equation
\begin{equation}\label{app:hjb:troncata}
\begin{cases}
\partial_t u_t+\frac12\Delta u_t+H^M(x,\nabla u_t(x))=0\\
u_T=h\,,
\end{cases}
\end{equation}
which is equal to the value function $u^M$ of the corresponding stochastic optimal control problem and that it is jointly continuous (in time and space) (cf. \cite[Theorem 3.7 and Remark 2.7]{zhu2010}). 

Next, we claim that
\begin{equation}\label{app:eq:appo:appo}
\sup_{x\in\mathbb{T}^d_L\eqsp
t\in[0,T]
}\left|\nabla \mathcal{V}^{T,h}_t(x)\right|\leq K\eqsp,
\end{equation}
for some positive constant $K$. By recalling that $\mathcal{V}_t^{T,h}\coloneqq -\log \rmP_{t,T}^b \rme^{-h}$, with $(\rmP^b_{t,s})_{0\leq t\leq s\leq T}$ being the semigroup associated to \eqref{eq:inhom_diffusion}, we may write
\bes
\mathcal{V}_t^{T,h}(x)=-\log \mathbb{E}[\exp(-h(X^{t,x}_T))]\quad\text{ where }\begin{cases}
\De X_s^{t,x}=b_s(X_s^{t,x})\De s+\De B_s\eqsp,\\
X_t^{t,x}=x\eqsp.
\end{cases}
\ees
Fix $y\in\mathbb{T}^d_L$ and consider a synchronous coupling with the previous process, \ie, write
\bes
\mathcal{V}_t^{T,h}(y)=-\log \mathbb{E}[\exp(-h(X^{t,y}_T))]\quad\text{ where }\begin{cases}
\De X_s^{t,y}=b_s(X_s^{t,y})\De s+\De B_s\eqsp,\\
X_t^{t,y}=y\eqsp,
\end{cases}
\ees
where we have used the same Brownian motion for the two SDEs above. This, the regularity of $h\in\mathcal{C}^3(\mathbb{T}^d_L)$ and the compactness of the torus allow us to write
\bes
\begin{split}
\mathcal{V}_t^{T,h}(x)-\mathcal{V}_t^{T,h}(y)=\log\biggl(1+\frac{\mathbb{E}[\rme^{-h(X^{t,y}_T)}-\rme^{-h(X^{t,x}_T)}]}{\mathbb{E}[\exp(-h(X^{t,x}_T))]}\biggr)\leq K\eqsp\mathbb{E}[\rme^{-h(X^{t,y}_T)}-\rme^{-h(X^{t,x}_T)}]\\
\leq K \mathbb{E}[\sfd(X^{t,x}_T,X^{t,y}_T)]\leq K\sfd(x,y)\,.
\end{split}
\ees
In the above derivation the constant $K$ differs at each step and we have relied on synchronous coupling technique in order to obtain a closed-form Gronwall-type estimate for $\sfd(X^{t,x}_T,X^{t,y}_T)$. We omit this explicit derivation since we are going to perform similar computations for the coupling by reflection later, and we are not interested in the magnitude of $K$. Let us just remark that $K$ depends on the size of the torus, on the Lipschitz norm of the drift and of $h$ and it can be taken uniformly with respect to $t\in[0,T]$.
This concludes the proof of the bound claimed in \eqref{app:eq:appo:appo}.

Since, we already know that $\mathcal{V}_t^{T,h}$ is a classical solution of \eqref{app:HJB}, the uniform estimate \eqref{app:eq:appo:appo} implies that $\mathcal{V}_t^{T,h}$ is also a classical solution of the truncated \eqref{app:hjb:troncata} for any $M\geq K$. Since we have uniqueness of viscosity solutions for \eqref{app:hjb:troncata}, we finally conclude that  
for any $M\geq K$, our function $\mathcal{V}_t^{T,h}$ is the unique viscosity solution, \ie the value function of the stochastic control problem associated to $H^M$. By letting $M\to+\infty$, we conclude the proof of Item~\ref{item:SOC_2}.

\medskip

We now show Item~\ref{item:SOC_3}. Let us start by considering on $(\Omega, (\mathcal{F}_s)_{s\in[0,T]},\mathcal{F}, \mathbb{P})$ the toroidal SDEs
\begin{equation}\label{sde:system}
\begin{cases}
\De X_s=b_s(X_s)\De s-\nabla \mathcal{V}^{T,h}_s(X_s)\De s+\De B_s\quad\forall s\in[t,T] \\
\De Y_s=b_s(Y_s)\De s-\nabla \mathcal{V}^{T,h}_s(X_s)\De s+\De \hat{B}_s\quad\forall s\in[t,\tau]\quad\text{and }Y_s=X_s\quad\forall s\in[\tau,T]\\
(X_t,Y_t)=(x,y)\in\mathbb{T}_L^d\times\mathbb{T}_L^d
\end{cases}
\end{equation}
where 
\bes
\tau\coloneqq \inf\{s\in[t,T]:X_s=Y_s\}\wedge T\,,
\ees
and the \emph{reflected} Brownian motion $\hat{B}$ is defined as
\begin{equation}\label{def:refl:brownian}
\De \hat{B}_s\coloneqq (\operatorname{I}-2\,e_s\,e_s^{{\sf  T}}\,\mathbf{1}_{\{s< \tau\}})\, \De B_s\qquad\text{where}\quad e_s\coloneqq \frac{\sin(\frac{\pi}{L}(X_s-Y_s))}{\|\sin(\frac{\pi}{L}(X_s-Y_s))\|}\,
\end{equation}
where the $\sin$ function applied to any vector of $\torusL^d$ as to be understood as a component-wise map applied to a representative in $[-\pi/2,+\pi/2)$. In the sequel, we consider the same extension for the $\cos$ function. 
By L\'evy's characterization, $\hat{B}$ is an $\mathcal{F}_s$-adapted Brownian motion on $\mathbb{T}_L^d$. The existence and well-posedness of the above coupling process is well known in literature ; see e.g. \cite{chen1989coupling}, where the authors build such coupling by considering a martingale problem associated to a well-chosen elliptic operator.

Therefore from the sub-optimality of $-\nabla \mathcal{V}^{T,h}_s(X_s)_{s\in[t,T]}$ as a control process in the stochastic optimal control problem over the probability space $\Sigma=(\Omega,(\mathcal{F}_s)_{s\in[t,T]},\mathbb{P},(\hat{B}_s)_{s\in[t,T]})$, we deduce that
\bes
\label{eq:proof_hjb_1}
 \mathcal{V}^{T,h}_t(y)-\mathcal{V}^{T,h}_t(x)\leq
 \mathbb{E}\biggl[h(Y_T)-h(X_T)\biggr]\leq \|h\|_{f_b} \mathbb{E}[f_b(\delta(X_T,Y_T))]\,.
 \ees

 Next we show that $\rme^{\lambda\, s}f_b(\delta (X_s,Y_s))$ is a super-martingale. For notations' sake in what follows we introduce the shortcut $\pi_L\coloneqq \pi/L$.
From a first application of Ito formula for any $i\in[d]$ and for any $s\in[t,\tau)$ we have
\bes
\begin{split}
 \De(\sin^2({\pi_L}& (X^i_s-Y^i_s))= {\pi_L}\sin(2{\pi_L}(X^i_s-Y^i_s))(b^i_s(X_s)-b^i_s(Y_s))\De s\\
&\qquad + 4{\pi_L}^2 (\rme^i_s)^2\cos(2{\pi_L}(X_s^i-Y_s^i))\De s
+2{\pi_L} \,\sin(2{\pi_L}(X_s^i-Y_s^i))\,\sum_{k=1}^d(e_s\,e_s^{{\sf  T}})_{ik}\,\De B^k_s\\
& \qquad = {\pi_L}\sin(2{\pi_L}(X^i_s-Y^i_s))(b^i_s(X_s)-b^i_s(Y_s))\De s\\
& \qquad \qquad +  4{\pi_L}^2 \frac{\sin^2({\pi_L}(X_s^i-Y_s^i))}{\|\sin({\pi_L}(X_s-Y_s))\|^2}\cos(2{\pi_L}(X_s^i-Y_s^i))\De s \\
& \qquad \qquad + 2{\pi_L} \,\sin(2{\pi_L}(X_s^i-Y_s^i))\,\sum_{k=1}^d(e_s\,e_s^{{\sf  T}})_{ik}\,\De B^k_s\,.
\end{split}\ees
By summing up  we get
\bes
\begin{split}
\De(\|\sin({\pi_L}(X_s-&Y_s))\|^2)  = {\pi_L}\sin(2{\pi_L}(X_s-Y_s))^{{\sf  T}}(b_s(X_s)-b_s(Y_s))\De s\\
&  \qquad + \frac{4{\pi_L}^2}{\|\sin({\pi_L}(X_s-Y_s))\|^2}\sum_{i=1}^d\sin^2({\pi_L}(X_s^i-Y_s^i))\cos(2{\pi_L}(X_s^i-Y_s^i)) \De s \\
&  \qquad + 2{\pi_L} \,\sin(2{\pi_L}(X_s-Y_s))^{{\sf  T}}\,e_s\,e_s^{{\sf  T}}\,\De B_s\,,
\end{split}\ees
from which we can deduce using $\cos(2\theta) = \cos^2(\theta) - \sin^2(\theta)$,
\bes
\begin{split}
\De\delta(X_s,Y_s)  =L\eqsp\|\sin({\pi_L}(X_s-Y_s))\|^{-1}\biggl\{\frac{{\pi_L}}{2}\sin(2{\pi_L}(X_s-Y_s))^{{\sf  T}}(b_s(X_s)-b_s(Y_s))\De s\\
+\frac{2{\pi_L}^2}{\|\sin({\pi_L}(X_s-Y_s))\|^2}\sum_{i=1}^d\sin^2({\pi_L}(X_s^i-Y_s^i))\cos(2{\pi_L}(X_s^i-Y_s^i))\De s\\
+{\pi_L}\sin(2{\pi_L}(X_s-Y_s))^{{\sf  T}}\,e_s\,e_s^{{\sf  T}}\,\De B_s\biggr\}\\
-\frac{L\,{\pi_L}^2}{2} \|\sin({\pi_L}(X_s-Y_s))\|^{-5}\langle \sin(2{\pi_L}(X_s-Y_s)),\sin({\pi_L}(X_s-Y_s))\rangle^2\De s
\end{split}
\ees

\bes
\begin{split}
\phantom{\De\delta(X_s,Y_s)  } =L\eqsp\|\sin({\pi_L}(X_s-Y_s))\|^{-1}\biggl\{\frac{{\pi_L}}{2}\sin(2{\pi_L}(X_s-Y_s))^{{\sf  T}}(b_s(X_s)-b_s(Y_s))\De s\\
+\frac{{\pi_L}^2}{2}\frac{\|\sin(2{\pi_L}(X_s-Y_s))\|^2}{\|\sin({\pi_L}(X_s-Y_s))\|^2}\De s- \frac{2{\pi_L}^2}{\|\sin({\pi_L}(X_s-Y_s))\|^2}\sum_{i=1}^d\sin^4({\pi_L}(X_s^i-Y_s^i))\De s\\
+{\pi_L}\sin(2{\pi_L}(X_s-Y_s))^{{\sf  T}}\,e_s\,e_s^{{\sf  T}}\,\De B_s\\
-\frac{{\pi_L}^2}{2\,\|\sin({\pi_L}(X_s-Y_s))\|^{4}}\langle \sin(2{\pi_L}(X_s-Y_s)),\sin({\pi_L}(X_s-Y_s))\rangle^2\De s\biggr\}
\end{split}
\ees
or equivalently, by setting $r_s\coloneqq \delta(X_s,Y_s)$ for notations' sake

\bes
\begin{split}
    \De r_s=r_s^{-1} \biggl\{\frac{{\pi \, L}}{2}\sin(2{\pi_L}(X_s-Y_s))^{{\sf  T}}(b_s(X_s)-b_s(Y_s))\De s+{\pi \, L}\sin(2{\pi_L}(X_s-Y_s))^{{\sf  T}}\,e_s\,e_s^{{\sf  T}}\,\De B_s\\
+\frac{(\pi \, L)^2}{2\,r_s^2}\|\sin(2{\pi_L}(X_s-Y_s))\|^2\De s- \frac{2(\pi \, L)^2}{r_s^2}\sum_{i=1}^d\sin^4({\pi_L}(X_s^i-Y_s^i))\De s\\
-\frac{\pi^2 L^4}{2\,r_s^{4}}\langle \sin(2{\pi_L}(X_s-Y_s)),\sin({\pi_L}(X_s-Y_s))\rangle^2\De s\biggr\}\,.
\end{split}
\ees
By applying Ito formula with $f_b$ (defined at \eqref{def:f}) we deduce 
\bes
\begin{split}
    & \De f_b(r_s) =\frac{f_b^{'}(r_s)}{r_s}\biggl\{
    \frac{{\pi \, L }}{2}\sin(2{\pi_L}(X_s-Y_s))^{{\sf  T}}(b_s(X_s)-b_s(Y_s))\De s \\
    & \quad +{\pi \, L}\sin(2{\pi_L}(X_s-Y_s))^{{\sf  T}}\,e_s\,e_s^{{\sf  T}}\,\De B_s 
    +\frac{(\pi \, L)^2}{2\,r_s^2}\|\sin(2{\pi_L}(X_s-Y_s))\|^2\De s \\
     & \quad- \frac{2(\pi \, L)^2}{r_s^2}\sum_{i=1}^d\sin^4({\pi_L}(X_s^i-Y_s^i))\De s
-\frac{\pi^2 \, L^4}{2\,r_s^{4}}\langle \sin(2{\pi_L}(X_s-Y_s)),\sin({\pi_L}(X_s-Y_s))\rangle^2\De s\biggr\}\\
     & \quad+\frac{\pi^2 \, L^4}{2\,r_s^4}f_b^{''}(r_s)\langle \sin(2{\pi_L}(X_s-Y_s)),\sin({\pi_L}(X_s-Y_s))\rangle^2\De s\,.
\end{split}
\ees
Now, notice that
\bes
\begin{split}
\frac{\pi \, L}{2}&\sin(2{\pi_L}(X_s-Y_s))^{{\sf  T}}(b_s(X_s)-b_s(Y_s))\\
=&\pi \, L \sum_{i=1}^d \cos({\pi_L}(X_s^i-Y_s^i))\sin({\pi_L}(X^i_s-Y^i_s))(b^i_s(X_s)-b^i_s(Y_s))\\
\leq& \pi \, L \sum_{i=1}^d \sin(\pi_L(X^i_s-Y^i_s))(b^i_s(X_s)-b^i_s(Y_s))= \pi \, L \sin\biggl(\frac{\pi}{L}(X_s-Y_s)\biggr)^{{\sf  T}}(b_s(X_s)-b_s(Y_s))\\
\leq& -\frac{{\pi^2}}{2} \kappa_b(r_s)\,r^2_s\,,
\end{split}
\ees
where we have from Definition \ref{app:new:def:kappa},
\[\kappa_b(r)\leq \inf_{s\in[0,T]}\inf\biggl\{-\frac{2L}{\pi}\frac{\sin(\frac{\pi}{L}(x-y))^{{\sf  T}}(b_s(x)-b_s(y))}{\delta^2(x,y)}\,:\,x\neq y\in\mathbb{T}_L^d\text{ s.t. }\delta(x,y)=r \biggr\}\,.\]

Therefore we obtain
\bes
\begin{split}
    & \De f_b(r_s)\leq \frac{\pi^2 \, L^4}{2\,r_s^4}f_b^{''}(r_s)\langle \sin(2{\pi_L}(X_s-Y_s)),\sin({\pi_L}(X_s-Y_s))\rangle^2\De s \\ +&\,\frac{f_b^{'}(r_s)}{r_s}\biggl\{
    -\frac{{\pi}^2}{2}\kappa_b(r_s)\,r_s^2\De s+{\pi \, L}\sin(2{\pi_L}(X_s-Y_s))^{{\sf  T}}\,e_s\,e_s^{{\sf  T}}\,\De B_s\\
+&\,\frac{(\pi \, L)^2}{2\,r_s^2}\|\sin(2{\pi_L}(X_s-Y_s))\|^2\De s
-\frac{\pi^2 \, L^4 }{2\,r_s^{4}}\langle \sin(2{\pi_L}(X_s-Y_s)),\sin({\pi_L}(X_s-Y_s))\rangle^2\De s\\
- &\,\frac{2(\pi \, L)^2}{r_s^2}\sum_{i=1}^d\sin^4({\pi_L}(X_s^i-Y_s^i))\De s
\biggr\}   \,.
\end{split}
\ees
Firstly, notice that the quadratic variation term may be trivially bounded since
\bes
\begin{split}
& \frac{L^4}{4}\langle \sin(2{\pi_L}(X_s-Y_s)),\sin({\pi_L}(X_s-Y_s))\rangle^2\\
&\qquad =\biggl(L^2\sum_{i=1}^d\sin^2({\pi_L}(X_s^i-Y_s^i))\cos({\pi_L}(X_s^i-Y_s^i))\biggr)^2\leq r^4_s\,,
\end{split}
\ees
Next, we claim that the summation of  final three terms in the above curly bracket is non-positive. Having in mind that, let us start by noticing that for any $\theta=(\theta^i)_{i\in[d]}\in [0,\pi/2)^d$ it holds\footnote{Let us recall that in the definition of $\sin(\pi_L(x-y))$ we always chose the representative such that $\pi_L(x-y)\in[0\pi/2)$, therefore it is enough considering $\theta\in[0,\pi/2)^d$.} 
\begin{align}
\|&\sin(\theta)\|^2\,\|\sin(2\theta)\|^2
-\langle \sin(2\theta),\sin(\theta)\rangle^2
- 4 \|\sin(\theta)\|^2\, \sum_{i=1}^d\sin^4(\theta^i)\\
&  = \sum_{i,j\in[d]} \sin^2(\theta^i) \sin^2( 2 \theta^j) - \biggl(\sum_{i=1}^d \sin(\theta^i) \sin( 2 \theta^i)\biggr)^2 -4 \sum_{i,j\in[d]} \sin^2(\theta^i) \sin^4( \theta^j)\\
&  = \frac{1}{2}\sum_{i,j\in[d]}\biggl(\sin(\theta^i) \sin( 2 \theta^j) - \sin(2 \theta^i) \sin(\theta^j)\biggr)^2 -4 \sum_{i,j\in[d]} \sin^2(\theta^i) \sin^4( \theta^j)\\
& = \frac{1}{2}\sum_{i,j\in[d]}\biggl(\sin(\theta^i) \sin( 2 \theta^j) - \sin(2 \theta^i) \sin(\theta^j)\biggr)^2 \\
 &\qquad\qquad\qquad-2 \sum_{i,j\in[d]} \sin^2(\theta^i) \sin^2( \theta^j) \biggl(\sin^2(\theta^i) + \sin^2( \theta^j)\biggr)\\
&  = \sum_{i,j\in[d]}\frac{1}{2}\biggl(\sin(\theta^i) \sin( 2 \theta^j) - \sin(2 \theta^i) \sin(\theta^j)\biggr)^2 -2 \sin^2(\theta^i) \sin^2( \theta^j) \biggl(\sin^2(\theta^i) + \sin^2( \theta^j)\biggr)
\end{align}
We are going to show that each term of the above summation is non-positive. Therefore let $x,y\in [0,\pi/2)$ and, owing to the duplication formula for the sine function, notice that 
\begin{align}
     \frac{1}{2}\biggl(\sin(x) \sin( 2 y) - \sin(2 x) \sin(y)\biggr)^2 -2 \sin^2(x) \sin^2( y) \biggl(\sin^2(x) + \sin^2(y)\biggr)\\
    =2\sin^2(x)\sin^2(y)\biggl[\bigl(\cos(y)-\cos(x)\bigr)^2-\sin^2(x)-\sin^2(y)\biggr]\\
     =4\sin^2(x)\sin^2(y)\biggl[\cos^2(x)+\cos^2(y)-\cos(x)\cos(y)-1\biggr]\eqsp.
\end{align}
Therefore our claim follows once we prove that the two-variable function
\bes
a,b\in[0,1]\mapsto f(a,b)\coloneqq a^2+b^2-ab-1
\ees
is non-positive, or equivalently that for any fixed $b\in[0,1]$
the one-variable function 
\bes
a\in[0,1]\mapsto g_b(a)\coloneqq a^2+b^2-ab-1
\ees
is non-positive. This latter claim is always met since (for any fixed $b\in[0,1]$) the above-defined function $g_b$ achieves its maximum value in $a=b/2$ which is negative and reads as
\bes
g_b(b/2)=\frac{3}{4}\,b^2-1< b^2-1\leq 0\eqsp.
\ees

We have therefore proven that 
\bes
\begin{split}
    \De f_b(r_s)& \leq 2{\pi}^2\biggl\{f_b^{''}(r_s)
    -f_b^{'}(r_s)\,\frac{\kappa_b(r_s)}4\,r_s\biggr\}\De s\\
    & + \qquad \frac{f_b^{'}(r_s)}{r_s}\,{\pi\,L}\,\sin(2{\pi_L}(X_s-Y_s))^{{\sf  T}}\,e_s\,e_s^{{\sf  T}}\,\De B_s \, ,
\end{split}
\ees
which combined with the differential property \eqref{eq:diff:f} for the function $f_b$ and with $r_s=0$ on $[\tau,T]$ and $f_b(0)=0$, reads as
\bes
\De f_b(r_s)\leq -\lambda_b\,\pi^2\, f_b(r_s)\,\De s +\pi\,L\,\frac{f_b^{'}(r_s)}{r_s}\,\sin\biggl(\frac{2\pi}{L}(X_s-Y_s)\biggr)^{{\sf  T}}\,e_s\,e_s^{{\sf  T}}\,\De B_s\qquad\forall s\in[t,T]\,.
\ees
By tacking expectation, from Gronwall Lemma we deduce
 \[\mathbb{E}\bigl[f_b(\delta(X_T,Y_T)\bigr]\leq \rme^{-\lambda_b\,\pi^2\,(T-t)}\delta(x,y)\,.\]

Therefore, by \eqref{eq:proof_hjb_1}, for any $x\neq y\in \mathbb{T}_L^d$ we have shown 
 \[\mathcal{V}^{T,h}_t(y)-\mathcal{V}^{T,h}_t(x)\leq \rme^{-\lambda_b\,\pi^2\,(T-t)}\|h\|_{f_b} \delta(x,y)\,,\]
 which ends our proof for Item~\ref{item:SOC_3}.
 
Finally, it is possible to relax Item~\ref{item:SOC_3} to the case $h\in\mathrm{Lip}(\mathbb{T}_L^d)$. This follows by a standard approximation technique, which is detailed in \cite[Lemma 3.1]{conforti2022coupling} for the Euclidean case and that applies straightforwardly to ours (by meaning of a periodic extension argument).
\end{proof}

\section{Proof of Lemma~\ref{lemma:diff:lip}}
\label{app:proof_lemm}

Let us firstly notice that as a byproduct of Proposition \ref{app:prop:contr} in Appendix \ref{app:gio} and \eqref{eq:fnorm_bound3},  we know that
\begin{align}\label{lip:bound}
  \|\mathcal{U}^{T,\psistar}_s\|_{\mathrm{Lip}}\leq \pi\,\|\mathcal{U}^{T,\psistar}_s\|_{f_V} &\leq \pi\,\rme^{-\lambda_V \,\pi^2\,(T-s)}\|\psistar\|_{f_V}
  \\
   &\leq \frac{\pi\,\rme^{\lambda_V \,\pi^2\,s}}{2}\frac{\|U_{\nu}\|_{f_V}+\exp(-\lambda_V \,\pi^2\,T)\|U_{\mu}\|_{f_V}}{\sinh(\lambda_V\,\pi^2\, T)}\,.
\end{align}
Now, if we denote the new drift as $b_s(x)\coloneqq -\nabla V(x)-\nabla \mathcal{U}^{T,\psistar}_s(x)$, for any $x\neq y\in\mathbb{T}_L^d$ such that $\delta(x,y)=r$, we have using \eqref{lip:bound}
\bes
\begin{split}
-\frac{2L}{\pi}\,&\frac{(b_s(x)-b_s(y))\cdot \sin(\frac{\pi}{L}(x-y))}{\delta^2(x,y)}\\
=&\,\frac{2L}{\pi}\,\frac{(\nabla V(x)-\nabla V(y))\cdot\sin(\frac{\pi}{L}(x-y))}{\delta^2(x,y)}+\frac{2L}{\pi}\,\frac{(\nabla \mathcal{U}^{T,\psistar}_s(x)-\nabla \mathcal{U}^{T,\psistar}_s(y))\cdot\sin(\frac{\pi}{L}(x-y))}{\delta^2(x,y)}\\
\geq&\, \kappa_V(r)-\frac{4L}{\pi\,r}\|\mathcal{U}^{T,\psistar}_s\|_{\mathrm{Lip}}\geq
\bar{\kappa}_V(r) \eqsp,
\end{split}
\ees
 where 
\begin{align}
    \kappa_V(r) &\coloneqq
\inf\biggl\{-\frac{2L}{\pi}\,\frac{(\nabla V (x)-\nabla V(y))\cdot \sin(\frac{\pi}{L}(x-y))}{\delta^2(x,y)}\,: \,x\neq y\in\mathbb{T}_L^d\quad \mathrm{s.t.}\quad\delta(x,y)=r
\biggr\} \eqsp,
\end{align}
and 
\begin{align}
\label{def:bar:kappa}\bar{\kappa}_V (r)&\coloneqq\kappa_V(r)-\frac4r\,\frac{\|U_{\mu}\|_{f_V}\vee\|U_{\nu}\|_{f_V}}{1-\exp(-\lambda_V \,\pi^2\,T)}\, . 
\end{align}

By applying Proposition \ref{app:prop:contr} in Appendix \ref{app:gio} to the SOC problem \eqref{stoch:diff}, considering $\bar{\kappa}_V$ as a modulus of weak-semiconvexity for $b_s(x)\coloneqq -\nabla V(x)-\nabla \mathcal{U}^{T,\psistar}_s(x)$ (see Definition~\ref{app:new:def:kappa}), we end up with
\bes
\|\mathcal{D}_t^n\|_{\bar f_V}\leq \rme^{-\blambdaV\,\pi^2\,(T-t)}\|\psi^n-\psistar\|_{\bar f_V}\, ,
\ees
where 
\begin{equation}\label{eq:lambda_bar}
    \blambdaV  = \left(\int_0^{L \, d^{1/2}} \int_0^r  \, \exp\left(\frac{1}{2}(r^2-s^2)-\frac{1}{4}\int_s^r t\bar{\kappa}_V(t)\De t \right)\De s\eqsp\De r\right)^{-1}
\end{equation}
and $\bar{f}_V$ is a concave continuous function satisfying for any function $\phi : \torusL^d \to \rset$
\begin{equation}
\frac{1}{\pi}\,\|\phi\|_{\mathrm{Lip}}\leq \|\phi\|_{\bar{f}_V}\leq \frac{\bar{C}_V^{-1}}{\sqrt{L\eqsp \pi}}\,\|\phi\|_{\mathrm{Lip}}\eqsp, \text{ with } \bar{C}_V=\frac{e^{-L^2\,d/2}}{2}\,\exp\left(\frac14\int_0^{L\,d^{1/2}} s\,\bar{\kappa}_V(s)\,\De s\right) \eqsp .
\label{eq:equiv_lip_norm_f_bar}
\end{equation}
In order to conclude, it is enough to recall that $\varphistar,\psistar$ satisfies the Schr\"odinger system
\[\begin{cases}
\varphi=U_\mu-\mathcal{U}^{T,\psistar}_0\\
\psi=U_\nu-\mathcal{U}^{T,\varphistar}_0\,,
\end{cases}\]
which combined with Sinkhorn algorithm definition gives
\bes
\|\varphi^{n+1}-\varphistar\|_{\bar f_V}=\|\mathcal{U}^{T,\psistar}_0-\mathcal{U}^{T,\psi^n}_0\|_{\bar f_V}=\|\mathcal{D}_0^n\|_{\bar f_V}\leq \rme^{-\blambdaV\,\pi^2\,T}\|\psi^n-\psistar\|_{\bar f_V}\,.
\ees
This proves the second bound in \eqref{eq:grad_exp_2}. The first one can be proven in the same way, using the very same $\blambdaV,\bar f_V$ (thanks to the symmetrized definition for $\bar{\kappa}_V$ given at \eqref{def:bar:kappa}). The estimates in \eqref{eq:grad_exp_3} are just the two-step bounds given via the former ones.

\section{Explicit convergence rates provided in Theorem \ref{thm:sink}}\label{app:explicit:computations}
In this appendix, we carry out the computations of the convergence rates presented in Section \ref{sec:explicit_rates}. We recall from the proof of Theorem~\ref{thm:sink}, see \eqref{eq:explicit_constants}, that for any potential $V$, the convergence rate $\gamma$ and the constant $\cconst$ may be computed as $\gamma=\exp(-\blambdaV \pi^2 T)$ and  $\cconst=\bar{C}_V^{-1}/\sqrt{L\, \pi}$, where $\blambdaV$ and $\bar{C}_V$ are respectively given at \eqref{eq:lambda_bar} and \eqref{eq:equiv_lip_norm_f_bar}, and are associated to the modulus introduced at \eqref{def:bar:kappa} as
\bes
\bar{\kappa}_V (r)\coloneqq\kappa_V(r)-\frac4r\frac{\|U_{\mu}\|_{f_V}\vee\|U_{\nu}\|_{f_V}}{1-\exp(-\lambda_V \,\pi^2\,T)}\,.
\ees

Let us recall that our final goal here is to give estimates of $\gamma$ and $\cconst$ when considering a time-homogeneous drift induced by a potential $V$ (i.e, $b_s(x)=-\nabla V(x)$) satisfying a one-sided Lipschitz bound (which can be interpreted as a version of $\alpha$-semiconvexity on the torus cf. \eqref{def:sin:a:conv}) for any $ x\neq y\in\mathbb{T}_L^d$,
\bes
\sin\biggl(\frac{\pi}{L}(x-y)\biggr)^{{\sf  T}}(\nabla V(x)-\nabla V(y))\geq\frac{\pi\,\alpha}{2L}\,\delta(x,y)^2 \eqsp, 
\ees
for some $\alpha\leq 0$. The above condition trivially implies a constant (and non-positive) modulus of semi-convexity $\kappa_V= \alpha$ to which we associate via equations \eqref{def:f} and \eqref{def:lambda:C} the triplet $C_V,\lambda_V,f_V$. Particularly, we have
\bes
\begin{aligned}
\phi_V(&r)=\,\rme^{\frac{\alpha}{8}r^2},\quad C_V=\frac{\rme^{\frac{\alpha}{8}L^2\,d}}{2}\quad\text{and}\\
\lambda_V^{-1}=&\,\int_0^{Ld^{1/2}}\rme^{-\frac{\alpha}{8}r^2}\int_0^r \rme^{\frac{\alpha}{8}s^2}\De s\,\De r\leq \int_0^{Ld^{1/2}}r\,\rme^{-\frac{\alpha}{8}r^2}\,\De r\,,\text{ \ie }\,\,\lambda_V\geq \frac{|\alpha|/4}{\rme^{\frac{|\alpha|}{8}L^2d}-1}\,.
\end{aligned}
\ees
Before moving on, let us notice that in the Brownian motion case (\ie when considering $V=0$) we have $\alpha=\kappa_V=0$ and therefore
\bes
\phi_0(r)=1\,,\quad C_0=\frac12\,,\quad\lambda_0=\frac{2}{L^2\,d}\,,\text{ and }\quad f_0(r)=r-\frac{L^2\,d}{6}r^3\,,
\ees
which agrees with the above lowerbounds in the $|\alpha|$ vanishing limit.

Given the above premesis, we are now ready to compute the triplet $(\bar{C}_V, \blambdaV, \bar f_V)$ associated to $\bar{\kappa}_V$
and, for notations' sake, let us introduce 
\bes 
M\coloneqq \frac{ \|U_{\mu}\|_{f_V}\vee\|U_{\nu}\|_{f_V}}{1-\exp(-\lambda_V \,\pi^2\,T)}\geq 0\,.
\ees
Then, we immediately have $\bar{\kappa}_V(r)=\alpha-\frac4r\,M$.
By equations \eqref{def:f} and \eqref{def:lambda:C}, we deduce that
\bes
\begin{aligned}
\bar\phi(r)=&\,\rme^{\frac{\alpha}{8}r^2-Mr},\quad  \bar{C}_V=\frac{\rme^{\frac{\alpha}{8}L^2d-M Ld^{1/2}}}{2}\quad\text{and}\\
\blambdaV^{-1}=&\,\int_0^{Ld^{1/2}}\rme^{-\frac{\alpha}{8}r^2+Mr}\int_0^r \rme^{\frac{\alpha}{8}s^2-Ms}\De s\,\De r\leq e^{MLd^{1/2}}\int_0^{Ld^{1/2}}r\,\rme^{-\frac{\alpha}{8}r^2}\,\De r\,\\
\,&\text{ and hence }\,\blambdaV\geq \frac{|\alpha|/4}{\rme^{\frac{|\alpha|}{8}L^2d}-1}\,\rme^{-MLd^{1/2}}\,.
\end{aligned}
\ees
In the above estimate we have decided to bound the exponential terms depending from $M$ since in the $T$ vanishing limit (\ie\, the interesting regime that approximates the optimal transportation problem) the leading order term will be exponential in $M$, \ie equal to $\rme^{-MLd^{1/2}}$ (up to a polynomial in $M$ prefactor). In the Brownian motion case, where there is no factor $\alpha$, one may want to carry out the exact computations which lead to
\bes
\bar\lambda_0=\frac{M^2}{e^{Ld^{1/2}M}-(1+M\,L \,d^{1/2})}\eqsp.
\ees

\medskip

Then, we obtain 
\bes
\begin{split}
\log\gamma=&\,-\pi^2 T\,\blambdaV \leq  -\pi^2 T\frac{|\alpha|/4}{\rme^{\frac{|\alpha|}{8}L^2d}-1}\,\rme^{-MLd^{1/2}}\eqsp, \label{eq:log_gamma_and_c}\\
\cconst=&\,\frac{\bar{C}_V^{-1} }{\sqrt{L \,\pi}}=2\frac{\rme^{\frac{|\alpha|}{8}L^2d+M Ld^{1/2}}}{\sqrt{L\,\pi}}\eqsp,
\end{split}
\ees
whereas in the Brownian case (where we denote by $\gamma_0$ the convergence rate) we have the same constant $\cconst$ (with $|\alpha|=0$) and rate of convergence
\bes
\begin{split}
\log\gamma_0=&\,-\pi^2 T\,\bar\lambda_0 =-\pi^2\,T\,\frac{M^2}{e^{Ld^{1/2}M}-1-MLd^{1/2}}\leq -\pi^2T\,M^2\exp(-Ld^{1/2}M)\eqsp.
\end{split}
\ees

By recalling the definition of $M$, we end up with
\bes
\begin{aligned}
\log\gamma\leq&\, -\pi^2 T\frac{|\alpha|/4}{\rme^{\frac{|\alpha|}{8}L^2d}-1}\,\exp\biggl(-L\,d^{1/2}\,\frac{ \|U_{\mu}\|_{f_V}\vee\|U_{\nu}\|_{f_V}}{1-\exp(-\lambda_V \,\pi^2\,T)}\biggr)\\
\leq&\, -\pi^2 T\frac{|\alpha|/4}{\rme^{\frac{|\alpha|}{8}L^2d}-1}\,\exp\left(-L\,d^{1/2}\,\frac{ \|U_{\mu}\|_{f_V}\vee\|U_{\nu}\|_{f_V}}{1-\exp\left(-\frac{|\alpha|/{4}}{\rme^{\frac{|\alpha|}{8}L^2d}-1} \,\pi^2\,T\right)}\right)
\end{aligned}
\ees
and similarly
\begin{align}
    \cconst\leq 2\frac{\rme^{\frac{|\alpha|}{8}L^2 d}}{\sqrt{L \, \pi}}\,\exp\left(L\,d^{1/2}\frac{ \|U_{\mu}\|_{f_V}\vee\|U_{\nu}\|_{f_V}}{1-\exp\left(-\frac{|\alpha|/{4}}{\rme^{\frac{|\alpha|}{8}L^2d}-1} \,\pi^2\,T\right)}\right)\,,
\end{align}
which are the bounds given at \eqref{eq:estimate_gamma} and \eqref{eq:estimate_c}. By following the same reasoning for the Brownian case and by recalling that $\lambda_0=2/(L^2\,d)$, we end up with
\bes
\begin{aligned}
\log\gamma_0\leq&\, -\pi^2 T \,\frac{ \|U_{\mu}\|^2_{f_0}\vee\|U_{\nu}\|^2_{f_0}}{(1-\exp(-2\,\pi^2\,T/L^2d))^2}\,\exp\biggl(-L\,d^{1/2}\,\frac{ \|U_{\mu}\|_{f_0}\vee\|U_{\nu}\|_{f_0}}{1-\exp(-2 \,\pi^2\,T/L^2d)}\biggr)\\
\leq&\,- L^4\,d^2\,\frac{ \|U_{\mu}\|^2_{f_0}\vee\|U_{\nu}\|^2_{f_0}}{4\,\pi^2\,T}\,\exp\biggl(-L\,d^{1/2}\,\frac{ \|U_{\mu}\|_{f_0}\vee\|U_{\nu}\|_{f_0}}{1-\exp(-2\,\pi^2\,T/L^2d)}\biggr)
\end{aligned}
\ees
which in the small-time limit behaves as
\bes
\log\gamma_0\sim -\pi^2\,D_{\mu,\nu}^2\,{D^4}\,T^{-1}\,\exp(- D_{\mu,\nu}\,D^3\,T^{-1})\,,
\ees
where $D_{\mu,\nu}\coloneqq \frac{1}{2\pi^2}\,\|U_{\mu}\|_{f_0}\vee\|U_{\nu}\|_{f_0}$ and $D=L\,d^{1/2}$, which is exactly what claimed at \eqref{eq:asympt:gamma0}.

 \medskip

Using \eqref{eq:equiv_lip_norm}, the above bounds may also be written as
\begin{align}
    \log\gamma&\leq -\pi^2 T\frac{|\alpha|/{4}}{\rme^{\frac{|\alpha|}{8}L^2 d}-1}\,\exp\left(-2\rme^{\frac{|\alpha|}{8}L^2 d}\sqrt{\frac{L\,d}{\pi}}\frac{ \|U_{\mu}\|_{\mathrm{Lip}}\vee\|U_{\nu}\|_{\mathrm{Lip}}}{1-\exp\left(-\frac{|\alpha|/{4}}{\rme^{\frac{|\alpha|}{8}L^2d}-1} \,\pi^2\,T\right)}\right)\\
    \cconst& \leq  2\frac{\rme^{\frac{|\alpha|}{8}L^2 d}}{\sqrt{L \, \pi}}\,\exp\left(2\rme^{\frac{|\alpha|}{8}L^2 d}\sqrt{\frac{L\,d}{\pi}}\frac{ \|U_{\mu}\|_{\mathrm{Lip}}\vee\|U_{\nu}\|_{\mathrm{Lip}}}{1-\exp\left(-\frac{|\alpha|/{4}}{\rme^{\frac{|\alpha|}{8}L^2d}-1} \,\pi^2\,T\right)}\right)\,.
\end{align}

\subsection{An example with a trigonometric potential} \label{subsec:example_V}
Here we demonstrate that it is natural to assume $\alpha$-semiconvexity conditions for potentials on the torus, by showing that \eqref{def:sin:a:conv} is satisfied by a well-defined class of potentials.

Let us started by considering $(\alpha_i, \beta_i)_{i\in [d]}\in \mathbb{R}^{2d}$ and for any $i=1, ...,d$,  define $\sigma_i=~\sqrt{\alpha_i^2 +\beta_i^2}$. Without loss of generality, we reorder the dimensions such that $\sigma_1\geq ... \geq \sigma_d$.

Consider the time-homogeneous drift induced by the potential $V$ defined by 
\bes
V(x)=\frac{L}{8}\,\sum_{i=1}^d\alpha_i\,\sin\biggl(\frac{2\pi}{L} \,x^i+\omega_i\biggr)+\beta_i\,\cos\biggl(\frac{2\pi}{L} x^i+\omega_i\biggr) \quad \forall x \in \mathbb{T}_L^d ,
\ees
where $\omega_i\in \mathbb{R}$ is a phase-shifter for any $i=1, ...,d$. Then, for any $(x,y)\in\mathbb{T}_L^d \times \mathbb{T}_L^d$, we have
\begin{equation}\label{sin_eq}
\begin{aligned}
    &\sin\biggl(\frac{\pi}{L}(x-y)\biggr)^{{\sf  T}}(\nabla V(x)-\nabla V(y))= \\
    & \quad -\frac{ \pi}{2}\sum_{i=1}^d \sin^2\biggl(\frac{\pi}{L}(x^i -y^i)\biggr)\biggl\{\alpha_i \sin\biggl(\frac{\pi }{L}(x^i+y^i)+\omega_i\biggr) + \beta_i \cos\biggl(\frac{\pi}{L} (x^i+y^i)+\omega_i\biggr)\biggr\}\,.
\end{aligned}
\end{equation}
Let $r\in [0, L \, d^{1/2}]$. To be able to compute the weak-semiconvexity modulus $\kappa_V(r)$, we first expect to solve the following maximization problem
\begin{align}
 \max_{(x,y)\in\mathbb{T}_L^d \times \mathbb{T}_L^d}
    & \sum_{i=1}^d \sin^2\biggl(\frac{\pi}{L}(x^i -y^i)\biggr)\biggl\{\alpha_i \sin\biggl(\frac{\pi }{L}(x^i+y^i)+\omega_i\biggr) + \beta_i \cos\biggl(\frac{\pi}{L} (x^i+y^i)+\omega_i\biggr)\biggr\} \\
    & \text{subject to } \delta(x,y)=r\,,
\end{align}
which, by a change of variable, reads as
\begin{equation}\label{max_pb_2}
\begin{aligned}
\max_{(u,v)\in \mathbb{T}_L^d \times \mathbb{T}_L^d} &
    \sum_{i=1}^d \sin^2\biggl(\frac{\pi}{L} u^i\biggr)\biggl\{\alpha_i \sin\biggl(\frac{\pi}{L} (u^i +2v^i)+\omega_i\biggr) + \beta_i \cos\biggl(\frac{\pi}{L} (u^i +2v^i)+\omega_i\biggr)\biggr\}\\
    &  \text{subject to } L^2 \sum_{i=1}^d \sin^2\biggl(\frac{\pi}{L} u^i\biggr)=r^2\,.
\end{aligned}
\end{equation}
Since the above constraint does not depend on $v$, and for any fixed $u^i \in \mathbb{T}^1_L$  
\bes   \max_{v_i\in\mathbb{T}_L^1} \alpha_i \sin\biggl(\frac{\pi}{L} (u^i +2v^i)+\omega_i\biggr) + \beta_i \cos\biggl(\frac{\pi}{L} (u^i +2v^i)+\omega_i\biggr)=\sigma_i\,,\ees
 Problem \eqref{max_pb_2} simply reduces to
\begin{align}
    \max_{u\in \mathbb{T}_L^d} &
    \sum_{i=1}^d \sigma_i \sin^2\biggl(\frac{\pi}{L} u^i\biggr) \quad  \text{subject to } \sum_{i=1}^d \sin^2\biggl(\frac{\pi}{L} u^i\biggr)=(r/L)^2.
\end{align}
Because of our $\{\sigma_i\}_{i \in [d]}$ ordering choice (and that $\sigma_i \geq 0$ for any $i \in [d]$) we may finally conclude that the maximum value in \eqref{max_pb_2} is equal to
\begin{align}
    \sum_{i=1}^d  \sigma_i \min(1 , ((r/L)^2 -i+1)^+) \label{M(r)}.
\end{align}
By combining \eqref{M(r)} with \eqref{sin_eq} , we therefore have
\begin{align}
    \kappa_V(r)&=-\frac{L}{r^2}\sum_{i=1}^d \sigma_i \min(1, ((r/L)^2 -i+1)^+) && \text{ if } r\in (0,L \, d^{1/2}]\\
    \kappa_V(0)&=-\sigma_1/L && \text{(by continuity)}
\end{align}
Let us stress out that $s \mapsto s\kappa_V(s)$ is integrable on $(0,L \, d^{1/2}]$ and that for any $r\in [0,L \,d^{1/2}]$, we have $\kappa(r)\leq 0$, as required by Definition~\ref{app:new:def:kappa}.

\end{document}